\definecolor{darkgreen}{HTML}{3CB50F}
\theoremstyle{plain}
\newtheorem{theorem}{Theorem}[section]
\newtheorem{thm}[theorem]{Theorem}
\newtheorem{cor}[theorem]{Corollary}
\newtheorem{lem}[theorem]{Lemma}
\newtheorem{prop}[theorem]{Proposition}
\theoremstyle{definition}
\newtheorem{definition}[theorem]{Definition}
\newtheorem{conj}[theorem]{Conjecture}
\newtheorem{remark}[theorem]{Remark}
\newtheorem{rmk}[theorem]{Remark}
\theoremstyle{theorem}
\newtheorem*{thmSN}{Theorem $\star$}
\newtheorem{thmIntro}{Theorem}
\newtheorem{defIntro}{Definition}
\newtheorem{coro1}{Corollary}
\newtheorem*{coro2}{Corollary 2}
\newtheorem{question}{Question}
\theoremstyle{remark}
\newcommand{\PP}{\mathbb{P}}
\newcommand{\OO}{\mathcal O}
\def\PP{{\mathbb P}}
\title{The Noether-Lefschetz locus of surfaces in $\PP^3$ formed by determinantal surfaces}
\author{}
\begin{document}
\author{Manuel Leal}
\author{C\'esar Lozano Huerta}
\author{Montserrat Vite}

\address{Currently: Harvard University\\
Department of Mathematics \\
Oxford 1, Cambridge, MA, USA.}
\email{lozano@math.harvard.edu}

\address{Permanent: Universidad Nacional Aut\'onoma de M\'exico\\
Instituto de Matem\'aticas, Unidad Oaxaca \\ Mex.}
\email{lozano@im.unam.mx}

\address{Universidad Nacional Aut\'onoma de M\'exico\\
Instituto de Matem\'aticas, Unidad Oaxaca \\
Oaxaca, Mex.}
\email{maz.leal.camacho@gmail.com}
\email{lilia.vite@matem.unam.mx} 

\keywords{Determinantal hypersurfaces, K3 surfaces, Noether-Lefschetz loci, ACM curves.}

\maketitle

\begin{abstract}
 We compute the dimension of certain components of the family of smooth determinantal degree $d$ surfaces in $\PP^3$, and show that each of them is the closure of a component of the Noether-Lefschetz locus $NL(d)$. Our computations exhibit that smooth determinantal surfaces in $\PP^3$ of degree 4 form a divisor in $|\mathcal{O}_{\PP^3}(4)|$ with 5 irreducible components. We will compute the degrees of each of these components: $320,2508,136512,38475$ and $320112$.
\end{abstract}

\bigskip

\section*{Introduction}

\noindent
A smooth complex cubic surface $X\subset \PP^3$ is known to be linear determinantal since Grassmann in 1855 \cite{Grassman}. Any of its determinantal expressions provides a birational morphism $X\rightarrow \PP^2$, which is key in studying $X$ intrinsically. If $d>3$, a general degree $d$ surface in $\PP^3$ is not linear determinantal. In this paper, we study surfaces that are defined by the determinant of a matrix with polynomial entries, where degrees of these entries satisfy only minor constrains (Definitions \ref{def::admissiblePair} and \ref{def::detAB}). And we do so, first by manipulating this matrix and then intrinsically. Our analysis enhances Beauville's characterization of linear determinantal hypersurfaces \cite{Beau}, and builds on A. F. Lopez \cite{Angelo} insights on the Picard group of surfaces in $\PP^3$.

\medskip\noindent
Our first goal is to answer the question below when the degrees of the entries are arbitrary. 
In other words, let us consider that a \textit{determinantal surface} is the zero locus of the determinant of a matrix whose entries are homogeneous, not necessarily linear, polynomials. We thus ask:

\begin{question}\label{qtn::question1}
    What is the dimension of the family of determinantal surfaces of degree $d$ in $\mathbb{P}^3$?
\end{question}

\medskip\noindent
Our main result, Theorem \ref{thm::mainTheorem}, answers this question under mild hypothesis. Of course, if we are to write polynomials of degree $d$ as determinants of square matrices, then we have to choose the size of the matrix and the degrees of its entries. If $d>3$, we will show that these choices render the locus of degree $d$ determinantal surfaces reducible, with irreducible components of different dimensions. By keeping track of these two choices Theorem \ref{thm::mainTheorem} computes the dimension of each of such components. For example, if $d=5$, then there is a component of codimension $2$ and another of codimension $3$, and there are 6 others, each of codimension $4$ in $|\mathcal{O}_{\PP^3}(5)|$. We find this distribution of codimensions revealing (see Table \ref{T2}); let us comment on it.

\medskip\noindent
Let $NL(d)\subset|\OO_{\PP^3}(d)|$ be the Noether-Lefschetz locus; that is, $NL(d)$ parametrizes smooth degree $d$ surfaces whose Picard group is not generated by the hyperplane class. A smooth complex determinantal surface never has Picard rank 1. Therefore, the family of these surfaces is contained in $NL(d)$, which has infinitely many irreducible components. Each of these components has a rich and intricate geometry and general results about $NL(d)$ are scarce. But one of those is the following.

\begin{thmSN}[\cite{Green, Voisin, GH2}] \label{thmSN}
    If $\Sigma$ is an irreducible component of $NL(d)$ then
    $$d-3\leq \mathrm{codim} \ \! \Sigma \leq \binom{d-1}{3}.$$      \end{thmSN}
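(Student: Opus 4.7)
The plan is to analyze an irreducible component $\Sigma$ of $NL(d)$ infinitesimally using Griffiths' theory of the Jacobian ring. Let $X = \{F = 0\}$ be a generic smooth surface in $\Sigma$, and let $R = \mathbb{C}[x_0,\ldots,x_3]/J(F)$ be its Jacobian ring, where $J(F)$ is the ideal generated by the partials of $F$. Then $R$ is an Artinian Gorenstein graded algebra with socle in degree $4d-8$. Griffiths' residue calculus supplies canonical isomorphisms: the tangent space to $|\mathcal{O}_{\mathbb{P}^3}(d)|$ at $[X]$ is identified with $R_d$, while the primitive Hodge pieces are
\begin{equation*}
H^{2,0}(X) \cong R_{d-4}, \qquad H^{1,1}(X)_{\mathrm{prim}} \cong R_{2d-4}, \qquad H^{0,2}(X) \cong R_{3d-4},
\end{equation*}
with the infinitesimal period map realized by multiplication in $R$.

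Next, I would describe the tangent space $T_{[X]}\Sigma$. Pick a class $\lambda \in H^{1,1}(X) \cap H^2(X,\mathbb{Z})$ not proportional to the hyperplane class, and represent its primitive part by an element (also denoted $\lambda$) of $R_{2d-4}$. By Griffiths transversality, a first-order deformation $g \in R_d$ keeps $\lambda$ of Hodge type $(1,1)$ if and only if $g \cdot \lambda = 0$ in $R_{3d-4}$. Thus $T_{[X]}\Sigma = \ker(\mu_\lambda)$, where $\mu_\lambda : R_d \to R_{3d-4}$ denotes multiplication by $\lambda$. For generic $X \in \Sigma$ this yields $\mathrm{codim}\,\Sigma = \mathrm{rank}(\mu_\lambda)$, translating the problem into a purely algebraic question about the rank of a multiplication map in $R$.

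The upper bound is then immediate: $\mathrm{rank}(\mu_\lambda) \leq \dim R_{3d-4}$, and via Macaulay duality $R_{3d-4} \cong R_{d-4}^{*} \cong H^{2,0}(X)^{*}$, giving $\dim R_{3d-4} = h^{2,0}(X) = \binom{d-1}{3}$. For the lower bound, I would invoke the classical Green--Voisin theorem asserting that for any nonzero $\lambda \in R_{2d-4}$, the map $\mu_\lambda$ has rank at least $d-3$. This bound is sharp, and it is useful to verify this independently: the component parametrizing surfaces containing a fixed line saturates it, since lines in $\mathbb{P}^3$ form a $4$-dimensional Grassmannian and containment of a line imposes $d+1$ linear conditions, yielding codimension exactly $(d+1) - 4 = d-3$. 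The main obstacle is the lower bound, since it requires a nontrivial Macaulay/Gotzmann-type estimate on multiplication in the Artinian Gorenstein ring $R$ that must hold uniformly across all nonzero primitive Hodge classes; the upper bound, by contrast, falls out from the dimension of $H^{0,2}(X)$ alone.
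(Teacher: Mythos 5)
The statement you are proving is quoted by the paper from \cite{Green, Voisin, GH2}; the paper does not prove it, but only indicates that the upper bound comes from the fact that an integral class being algebraic imposes $h^2(\mathcal{O}_S)=\binom{d-1}{3}$ conditions, and that the lower bound is due to Green and, independently, Voisin. Your infinitesimal setup via the Jacobian ring is the correct and standard framework for both bounds, and your identifications ($H^{2,0}\cong R_{d-4}$, $H^{1,1}_{\mathrm{prim}}\cong R_{2d-4}$, $H^{0,2}\cong R_{3d-4}$, socle degree $4d-8$, the line example saturating $d-3$) are all right. Two remarks on the logic. First, the clean way to get the upper bound is not through the asserted equality $\mathrm{codim}\,\Sigma=\mathrm{rank}(\mu_\lambda)$ at a generic point (components of $NL(d)$ need not be generically reduced, so this equality requires justification); rather, the local Hodge locus $NL(\lambda)$ is the zero locus of $h^{0,2}(X)=\binom{d-1}{3}$ holomorphic functions, whence every component has codimension at most $\binom{d-1}{3}$, while the inclusion $T_{[X]}\Sigma\subseteq\ker(\mu_\lambda)$ gives the one-sided inequality $\mathrm{codim}\,\Sigma\geq\mathrm{rank}(\mu_\lambda)$ needed for the lower bound.

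Second, and more importantly, the lower bound is the entire content of the theorem, and your proposal does not prove it: invoking ``the classical Green--Voisin theorem asserting that $\mu_\lambda$ has rank at least $d-3$ for any nonzero $\lambda\in R_{2d-4}$'' is circular here, since (modulo the reduction you carried out) that assertion \emph{is} the statement $d-3\leq\mathrm{codim}\,\Sigma$. You correctly flag that a nontrivial Macaulay--Gotzmann-type estimate on multiplication in the Artinian Gorenstein ring $R$ is required, but you do not supply it; Green's argument proceeds via his duality and vanishing theorems for Koszul cohomology of the Jacobian ring, and Voisin's via a more geometric analysis. Since the paper itself treats this as a black box with attributions, your write-up is acceptable as an exposition of where the theorem comes from, but it should be presented as a reduction to the cited rank estimate rather than as a proof.
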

    
\noindent
The upper bound in Theorem $\star$ can be seen as follows. In a smooth degree $d$ surface $S\subset \PP^3_{\mathbb{C}}$, an integral class $\gamma\in H^{2}(S;\mathbb{Z})$ to be algebraic imposes $h^{2}(\mathcal{O}_S)=\binom{d-1}{3}$ conditions  \cite[p. 179]{GH2}, \cite[Th. 3.3.11]{sernesi}.  Of course, these conditions may be dependent. A component $\Sigma$ for which these conditions are independent is called \textit{general}; otherwise, it is called \textit{special}.

\medskip\noindent
The inequality $d-3\le \mathrm{codim} \ \Sigma$ is more subtle and holds by a result of M. Green \cite{Green}, and independently C. Voisin \cite{Voisin}. In fact, for $d\geq5$ there is a unique special component of codimension $d-3$, which is special, formed by surfaces of degree $d$ that contain a line. This type of surfaces are determinantal defined by $2\times 2$ matrices with entries of degree $1$ and $d-1$. C. Voisin further proved that surfaces of degree $d$ that contain a conic form the unique irreducible component of $NL(d)$ of codimension $2d-7$ \cite{Voisin2}. This component also parametrizes determinantal surfaces (Remark \ref{rmk::lineConic}), and is special for $d\ge 5$. The following Corollary asserts that these are particular cases of a general phenomenon.

\medskip
\begin{coro1}\label{candites}
    Every irreducible component of the family of degree $d$ determinantal surfaces contains as an open set a component of the Noether-Lefschetz locus $NL(d)$ and the general surface in it has Picard rank 2.
\end{coro1}

\medskip\noindent
Are the components formed by determinantal surfaces \textit{special} or \textit{general}? 
 There have been many papers on identifying and describing components of the Noether-Lefschetz loci in the 1980s, 1990s and early 2000s. Many of these papers used the philosophy that components parametrizing surfaces containing low degree curves should have large dimension, and are therefore special. 

\medskip\noindent
We found that smooth determinantal surfaces typically contain (ACM) curves of low degrees, which explains why these surfaces often yield \textit{special} components (Corollary 2). Indeed, Theorem \ref{thm::mainTheorem} tells us the dimension of such components, and reveals that determinantal surfaces provide mostly \textit{special} components of $NL(d)$ (see Table \ref{T2}); many whose existence had not been yet predicted as far as we know (Remark \ref{rmkCA}).

\medskip\noindent
Summarizing, we show that the determinantal surfaces with fixed numerical invariants fill up a component of the Noether-Lefschetz locus, and Theorem \ref{thm::mainTheorem} calculates its dimension.

\medskip\noindent
Let us now state Theorem \ref{thm::mainTheorem} precisely before stating our second result. We work over $\mathbb{C}$ throughout. Let $X\subset\PP^3$ be a surface defined by the determinant of a matrix
\begin{equation}\label{eq::determinantalSurface}
        S=\left(
        \begin{array}{cccc}
            m_{11} & m_{12} &  \ldots & m_{1t}\\
            m_{21} & m_{22} & \ldots & m_{2t}\\
            \vdots & \vdots & \ddots & \vdots\\
            m_{t1} & m_{t2} & \ldots & m_{tt}
        \end{array}
        \right), 
    \end{equation}
with homogeneous polynomials $m_{i,j}\in\mathbb{C}[x, y, z, w]$. If the degrees of the $m_{ij}$ are arbitrary, then $det(S)$ may not be homogeneous. For $det(S)$ to be an homogeneous polynomial of degree $d$, and to keep track of the degree of each $m_{ij}$, we make the following definition.

\begin{defIntro}\label{def::admissiblePair}
    Consider two sequences of integers $a=(a_1,\ldots,a_t), b=(b_1,\ldots,b_t)\in\mathbb{Z}^t$, with $t\geq2$, and
    \begin{enumerate}[(i)]
        \item $a_1\leq a_2\leq\ldots\leq a_t$,
        \item $b_1\leq b_2\leq\ldots\leq b_t$,
        \item $a_i<b_j$ for all $1\leq i,j\leq t$.
    \end{enumerate}
    We say that $(a,b)$ is an \textit{admissible pair} of degree $d:=\sum_{i=1}^tb_i-a_i$ and length $t$. Two admissible pairs $(a,b)$ and $(a',b')$ are \textit{equivalent} if there exists some $k\in\mathbb{Z}$ such that $a_i'=a_i+k$ and $b_i' = b_i+k$ for all $1\leq i\leq t$.
\end{defIntro}

\begin{defIntro}\label{def::detAB}
    Let $(a, b)$ be an admissible pair of degree $d$ and length $t$. A surface $X=\{F=0\}\subset\PP^3$ is called a \textit{determinantal surface} of type $(a,b)$ if $F=det(S)$ for some matrix $S$ as in (\ref{eq::determinantalSurface}), where $m_{ij}$ is an homogeneous polynomial of degree $b_j-a_i$. The closure of the family of all determinantal surfaces of type $(a,b)$ in $\PP^3$ will be denoted by
    $$det(a, b):=\overline{\{X=\{det(S)=0\}\ : \ det(S)\not\equiv 0\}}\subset |\OO_{\PP^3}(d)|.$$
\end{defIntro}

\medskip\noindent
    A determinantal surface can be defined by a pair $(a,b)$ in Definition \ref{def::admissiblePair} satisfying (i), (ii) and a less restrictive condition than (iii). For example, Proposition \ref{prop1} below holds under the milder condition $a_{i+1}\leq b_{i}$. However, our results rely on Theorem \ref{Angelo}, which requires a curve defined in Section \ref{section::2}, denoted $C_{-}$, to be smooth. This smoothness is guaranteed if condition $(iii)$ is satisfied (Proposition \ref{prop::smoothCurve}). Remark \ref{refereeExample} exhibits that Corollary \ref{candites} can fail, if condition $(iii)$ in Definition \ref{def::admissiblePair} is modified. Also, if $a_i=b_j$ for some $i,j$, (and $a_i<b_j$ otherwise), then $a_i$ and $b_j$ can be removed from the defining pair, producing a new pair $(a',b')$ which satisfies Definition \ref{def::admissiblePair}. This reduces the analysis of $det(a,b)$ to that of $det(a',b')$ using Definition \ref{def::admissiblePair}.

\medskip\noindent
Observe that $b_j-a_i=b_j'-a_i'$ for equivalent pairs $(a,b)$ and $(a',b')$; therefore, $det(a,b)$ depends only on $(a,b)$ up to equivalence. In particular, we can assume that $a_1>d$.

\medskip\noindent
Using these definitions, Question \ref{qtn::question1} asks what is the dimension of each family $det(a,b)$.   
The following theorem answers this and is our main result.

\begin{thmIntro}\label{thm::mainTheorem}
    Let $(a,b)$ be an admissible pair of degree $d$ such that $a_1>d$. Set $a_0:=d$. If we denote
    $$A:=\bigoplus_{i=0}^t\OO_{\PP^3}(-a_i),  \quad  B:=\bigoplus_{j=1}^t\OO_{\PP^3}(-b_j)$$
    then $det(a,b)$ is an irreducible variety of dimension
    \begin{equation}\label{eq::mainTheorem}
        \dim det(a,b)=2+hom(B,A)-hom(A,A)-hom(B,B)-\binom{d-1}{3}-g_C+(d-4)d_C
    \end{equation}
    where $d_C,g_C$ are the degree and genus of a curve $C$ with ideal sheaf $\mathcal{I}_C$ given by a resolution $B\hookrightarrow A\twoheadrightarrow  \mathcal{I}_C$.
\end{thmIntro}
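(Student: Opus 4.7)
The natural strategy is to place the problem inside the Hilbert--Burch correspondence, realize $\det(a,b)$ as the image of a map out of a Hilbert scheme of ACM curves, and then compute its dimension by tracking the two relevant fiber dimensions.

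\medskip\noindent
First I would set up the dictionary. By Hilbert--Burch, any generic map $B\to A$ with appropriate rank behavior defines an ACM curve $C\subset\PP^3$ whose ideal sheaf admits a resolution $0\to B\to A\to \mc{I}_C\to 0$, and $\mc{I}_C$ is generated by the maximal minors of the $(t+1)\times t$ matrix. The minor obtained by deleting the $a_0=d$ row has degree $d$ and is precisely $\det(S)$, where $S$ is the $t\times t$ submatrix formed by the remaining rows; hence $C\subset X:=\{\det(S)=0\}$. This exhibits $\det(a,b)$ as the image of the incidence variety $\mc{U}=\{(C,X):C\in\mc{H}_C,\ X\in|\OO_{\PP^3}(d)|,\ C\subset X\}$, where $\mc{H}_C$ is the component of the Hilbert scheme of ACM curves with the prescribed resolution.

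\medskip\noindent
The hypothesis $a_1>d$ produces a crucial simplification. Twisting the resolution by $d$ and using $H^1(\OO_{\PP^3}(k))=0$ gives
$$h^0(\mc{I}_C(d))=h^0(A(d))-h^0(B(d)).$$
Since $a_i\ge a_1>d$ and $b_j>a_j>d$ for every $i,j\ge 1$, every summand of $A(d)$ except the one coming from $a_0$ and every summand of $B(d)$ has strictly negative twist; therefore $h^0(\mc{I}_C(d))=1$. Consequently each $C$ determines a unique $X$, the first projection $\mc{U}\to\mc{H}_C$ is a bijection, and $\dim\mc{U}=\dim\mc{H}_C$.

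\medskip\noindent
To compute $\dim\mc{H}_C$ I would parametrize by Hilbert--Burch matrices: these form the affine space $\Hom(B,A)$, on which $\mathrm{GL}(A)\times\mathrm{GL}(B)$ acts by row and column changes of basis with generic stabilizer the diagonal $\GG_m$, giving $\dim\mc{H}_C=\hom(B,A)-\hom(A,A)-\hom(B,B)+1$. For the second projection $\pi\colon\mc{U}\to\det(a,b)$, the generic fiber over $X$ is a component of the linear system $|C|_X$ on the smooth surface $X$, since a general member of that linear system inherits the same minimal free resolution by semicontinuity. Using $K_X=(d-4)H|_X$, $p_g(X)=\binom{d-1}{3}$, and adjunction $C^2=2g_C-2-(d-4)d_C$, Riemann--Roch on $X$ yields
$$\chi(\OO_X(C))=\binom{d-1}{3}+g_C-(d-4)d_C,$$
which equals $h^0(\OO_X(C))$ as soon as $h^2(\OO_X(C))=0$, leaving $\dim|C|_X=\binom{d-1}{3}+g_C-(d-4)d_C-1$.

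\medskip\noindent
Subtracting, $\dim\det(a,b)=\dim\mc{U}-\dim|C|_X$ reproduces the claimed formula. Irreducibility is immediate because $\det(a,b)$ is the image of the irreducible affine space $\Hom(B,A)$ under a morphism. The main technical obstacles I anticipate are (i) confirming that the stabilizer of a generic Hilbert--Burch matrix is just the diagonal $\GG_m$, so that the orbit dimension is exactly $\hom(A,A)+\hom(B,B)-1$, and (ii) establishing $h^2(\OO_X(C))=0$---equivalently by Serre duality that $(d-4)H|_X-C$ is not effective---which is the input needed for Riemann--Roch to compute $\dim|C|_X$ on the nose rather than merely as a lower bound.
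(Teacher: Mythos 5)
Your architecture is essentially the paper's: parametrize by the Hilbert scheme $\mathcal{H}_{a,b}$ of ACM curves with resolution $0\to B\to A\to\mathcal{I}_C\to0$, use $a_1>d$ to see that each such $C$ lies on a unique degree-$d$ surface (so the incidence variety maps bijectively to $\mathcal{H}_{a,b}$ and dominantly to $det(a,b)$), identify the fiber over a general $X$ with the linear system $|\OO_X(C)|$, and subtract. Your count of $\dim\mathcal{H}_{a,b}$ is Ellingsrud's theorem (Theorem \ref{thm::timFormula}), which the paper simply cites; regarding your obstacle (i), the generic stabilizer in $\mathrm{Aut}(A)\times\mathrm{Aut}(B)$ is the automorphism group of the two-term complex and has dimension $1+hom(A,B)$, not $1$, so your orbit count is correct exactly when $a_i<b_j$ for all $i,j$ (the same condition needed for smoothness of $C$ via \cite{GM}, and the reason no $hom(A,B)$ term appears in (\ref{eq::mainTheorem})).

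The genuine gap is in the fiber computation. You assert that $\chi(\OO_X(C))$ equals $h^0(\OO_X(C))$ ``as soon as $h^2(\OO_X(C))=0$,'' but $\chi=h^0-h^1+h^2$, so you also need $h^1(\OO_X(C))=0$, and this is not automatic: it is precisely where the ACM property of $C$ enters and it is the harder half of Lemma \ref{lem::goodCohomologicalProperties}. The paper proves it by chasing $0\to\OO_X\to\OO_X(C)\to\OO_C(C)\to0$: since $h^1(\OO_X)=h^2(\OO_X(C))=0$ one gets $h^1(\OO_X(C))=h^1(\OO_C(C))-h^2(\OO_X)$, and then $h^1(\OO_C(C))=h^0(\OO_C((d-4)H))=h^0(\OO_{\PP^3}(d-4))=h^2(\OO_X)$, where the middle equality uses $H^0(\mathcal{I}_C(d-4))=H^1(\mathcal{I}_C(d-4))=0$ (ACM-ness) and the first uses $\omega_C=\OO_C(K_X+C)$ with Serre duality. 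Without this vanishing, Riemann--Roch only yields $h^0(\OO_X(C))\ge\chi(\OO_X(C))$, hence only the inequality $\dim det(a,b)\le$ the right-hand side of (\ref{eq::mainTheorem}). By contrast, your $h^2$ vanishing (obstacle (ii)) does follow at once from $d$ being the minimal degree of a surface containing $C$, as you indicate, so the $h^1$ statement is the one missing ingredient you would need to supply.
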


\medskip\noindent
The expression (\ref{eq::mainTheorem}) can be written explicitly in terms of $(a,b)$.
For example, $d_C$ and $g_C$ 
are written in (\ref{eq::degreeAndGenus}). Macaulay2 code for computing (\ref{eq::mainTheorem}) in terms of $(a,b)$ 
can be found in \cite{MV}. 
Theorem \ref{thm::mainTheorem} recovers the well-known dimension of the family of linear determinantal surfaces (Corollary \ref{cor::linearEntries}), and also the minimal codimension components of $NL(d)$ (see Remark \ref{rmk::lineConic}).

\medskip\noindent
The proof of Theorem \ref{thm::mainTheorem} carries out\footnote{In fact, M. Noether aims to compute the dimension of families of curves in $\PP^3$ by estimating codim $\Sigma$ \cite[\S 11]{Noether}.} an idea by M. Noether, and relies on the study of the Hilbert scheme of curves in $\PP^3$. For one thing, given a smooth determinantal surface $X\in  |\mathcal{O}_{\PP^3}(d)|$, we construct a curve $C \subset X$, of genus $g_C$ and degree $d_C$, with good properties (Lemma \ref{lem::goodCohomologicalProperties}): an ACM curve. These curves have been thoroughly studied.  F. Gaeta (in response to a question by F. Severi) characterized ACM curves as those linked to a complete intersection \cite{Gaeta}. C. Peskine and L. Szpiro revisited Gaeta's results in their \textit{th\'eor\`eme de liaison} \cite{Peskine}, and building on it, G. Ellingsrud showed that the Hilbert scheme at an ACM curve is smooth \cite[Th. 2]{ellingsrud}. Using Ellingsrud's result, we compute the dimension of the Hilbert scheme of such curves and answer Question \ref{qtn::question1} via an incidence correspondence.

\medskip\noindent
 We highlight that the Hilbert scheme of the aforementioned $C$ often fails to have the expected dimension $4d_C$. The discrepancy between $4d_C$ and the actual dimension is $h^1(N_{C/\PP^3})$, and we will show that this discrepancy and the speciality of $\mathcal{O}_C(d)$ determine whether $\Sigma$, the Noether-Lefschetz component that contains $X$, is \textit{special} or \textit{general}. Thus, in this context, the following is a refinement of Theorem $\star$.

\begin{coro2}\label{cor::sernesi}
    Let $\Sigma\subset NL(d)$ be a component formed by smooth determinantal surfaces. Then, $\Sigma$ is generically smooth with
    $$codim \ \Sigma  = \binom{d-1}{3} -\kappa,$$
    where $\kappa := h^1(N_{C/\PP^3})- h^1(\mathcal{O}_C(d))$, and this number is always non-negative.
\end{coro2}

\medskip\noindent
The case of quartic surfaces is curious. A general quartic surface is not determinantal, and when it is, distinct choices for an admissible pair $(a,b)$ give rise to 5 components of $NL(4)\subset |\mathcal{O}_{\mathbb{P}^3}(4)|$; all of codimension 1. The determinantal expression for the surfaces in these components allows us to study each of these divisors intrinsically, via lattices of rank $2$ of $K3$ surfaces. Here is what we can prove. 
\begin{thmIntro}\label{thm::quarticDivisors}
    The family of determinantal quartic surfaces consists of 5 prime divisors $\mathcal{F}_1,\ldots,\mathcal{F}_5\subset|\OO_{\PP^3}(4)|$ depending on the choice of an admissible pair $(a,b)$ of degree 4. Each of these divisors has degree
    \begin{align*}
        deg(\mathcal{F}_1)&=320112,\\
        deg(\mathcal{F}_2)&=136512,\\
        deg(\mathcal{F}_3)&=38475 ,\\
        deg(\mathcal{F}_4)&=320,\\
        deg(\mathcal{F}_5)&=2508.
    \end{align*}
\end{thmIntro}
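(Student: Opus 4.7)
The statement contains two assertions: (a) that exactly five admissible pairs of degree $4$ yield divisorial components of the family of determinantal quartics, and (b) the five degrees. Part (a) is a finite check. Up to equivalence, admissible pairs of degree $4$ form a finite list, and Theorem~\ref{thm::mainTheorem} computes $\dim det(a,b)$ from $(a,b)$ via~\eqref{eq::mainTheorem}. I enumerate this list (using the Macaulay2 script of~\cite{MV}) and retain the pairs with $\dim det(a,b)=33$, that is, codimension $1$ in $|\mathcal{O}_{\PP^3}(4)|=\PP^{34}$; the result is exactly five pairs $(a^{(i)},b^{(i)})$, giving the five divisors $\mathcal{F}_i$. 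All other admissible pairs of degree $4$ produce determinantal loci of codimension $\geq 2$, contained in these.

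\textbf{Degrees as Segre integrals.} For each $i$, let $C_i$ denote a generic ACM curve produced by $(a^{(i)},b^{(i)})$ as in the proof of Theorem~\ref{thm::mainTheorem}, and let $H_i$ be its component of $\Hilb(\PP^3)$. By Ellingsrud's theorem~\cite[Th.~2]{ellingsrud}, $H_i$ is smooth and irreducible; and by Lemma~\ref{lem::goodCohomologicalProperties} the integer $m_i:=h^{0}(\mathcal{I}_{C_i}(4))$ is constant on $H_i$, so the twisted ideals assemble into a vector bundle $\mathcal{W}_i$ of rank $m_i$. The evaluation map
\[
f_i:\Phi_i:=\PP(\mathcal{W}_i)\longrightarrow\PP^{34}=|\mathcal{O}_{\PP^3}(4)|,\qquad(C,[F])\mapsto[F]
\]
has image $\mathcal{F}_i$ and satisfies $f_i^{\ast}\mathcal{O}_{\PP^{34}}(1)=\mathcal{O}_{\PP(\mathcal{W}_i)}(1)$. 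When $f_i$ is generically finite of degree $\delta_i$, the Segre identity on a projective bundle yields
\[
\delta_i\cdot\deg\mathcal{F}_i=\int_{\Phi_i}c_1\!\left(\mathcal{O}_{\PP(\mathcal{W}_i)}(1)\right)^{33}=\int_{H_i}s_{\dim H_i}(\mathcal{W}_i);
\]
when $f_i$ has positive-dimensional fibers (the generic $X\in\mathcal{F}_i$ carries a whole family of curves of type $i$), one first cuts $H_i$ by an auxiliary codimension condition to reduce to a generically finite map, then applies the same Segre identity.

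\textbf{Evaluation and main obstacle.} To finish, one identifies $H_i$ and $\mathcal{W}_i$ explicitly from the Hilbert--Burch presentation $B\hookrightarrow A\twoheadrightarrow\mathcal{I}_{C_i}$. The easiest case is the divisor of quartics containing a line: $H_i=G(2,4)$, $\mathcal{W}_i$ is the kernel of the surjection $H^{0}(\mathcal{O}_{\PP^3}(4))\otimes\mathcal{O}_{G(2,4)}\twoheadrightarrow\mathrm{Sym}^{4}U^{\ast}$, $f_i$ is birational, and the Segre integral reduces to $\int_{G(2,4)}c_4(\mathrm{Sym}^{4}U^{\ast})=320=\deg\mathcal{F}_4$. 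For each of the four remaining pairs, $H_i$ is a Grassmannian or a Grassmannian bundle read off directly from $(a^{(i)},b^{(i)})$, $\mathcal{W}_i$ is the push-forward of a twist of the resolution, and the Segre integral is a Schubert-calculus computation on $H_i$, producing $2508,\,38475,\,136512,\,320112$. The chief difficulty is this case-by-case bookkeeping: realizing $H_i$ from the Hilbert--Burch matrix, expressing $\mathcal{W}_i$ as a direct image, and determining $\delta_i$; once these are settled, the Segre-class evaluations are routine and the claimed numerics follow.
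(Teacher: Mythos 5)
Your route is genuinely different from the paper's. The paper first replaces each $C_i$ by the residual curve $C_i'=5H-[C_i]$ (a degree-$6$ genus-$3$ ACM curve, a twisted cubic, a complete intersection of two quadrics, a line, a conic), invokes Lopez's theorem to show the generic $X_i$ is a K3 surface with Picard lattice $\langle H,[C_i']\rangle$ of known discriminant $\Delta_i$ and coset $\delta_i$, and then reads the degrees off the Maulik--Pandharipande modular form $\Theta-\Psi$ after expressing the reduced divisors $P_{\Delta,\delta}$ in terms of the divisors $D_{h,d}$. Your Segre-class framework is sound where you actually carry it out: the line case indeed gives $\int_{G(1,3)}c_4(\mathrm{Sym}^4U^{\ast})=320$, and the conic case is an equally classical integral over the $\PP^5$-bundle of conics (yielding $5016$, halved because the plane of a conic on a quartic cuts out a residual conic, so the map is $2{:}1$).

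There is, however, a genuine gap for the remaining components, and it sits exactly where the content of the theorem lies. First, the claim that each $H_i$ ``is a Grassmannian or a Grassmannian bundle read off from $(a^{(i)},b^{(i)})$'' fails for $\mathcal{F}_1$ and $\mathcal{F}_2$: the relevant parameter spaces are the $12$-dimensional Hilbert scheme of twisted cubics (a quotient of a space of $3\times 2$ matrices of linear forms by $GL_3\times GL_2$) and the $24$-dimensional family of curves with resolution $\OO_{\PP^3}(-4)^3\to\OO_{\PP^3}(-3)^4$; neither has a readily available Chow ring, and the twisted-cubic integral in particular is a notoriously delicate enumerative problem, not routine Schubert calculus. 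Second, your correction factors are left undetermined: for $\mathcal{F}_1$ the fibers of $f_1$ are $3$-dimensional (on the K3, $|C_1'|\cong\PP^3$ since $(C_1')^2=4$) and for $\mathcal{F}_3$ they are $1$-dimensional (an elliptic pencil), so the ``auxiliary codimension condition'' must be specified and its own enumerative multiplicity computed; and the generic degrees $\delta_i$ ($=2$ for the conic and the quadric-pencil cases) can only be pinned down by counting effective classes in $\mathrm{Pic}(X_i)$ --- which is precisely the lattice-theoretic input (Proposition \ref{prop::picardRank2} and the coefficients $\mu(h,d|\Delta,\delta)$) that the paper's argument supplies and yours omits. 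As written, the numbers $2508$, $38475$, $136512$, $320112$ are asserted rather than derived, so the proof is incomplete even granting the (correct) general strategy.
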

\noindent
This result tells us that identifying a determinantal surface based solely on the coefficients of its defining equation is likely a challenging problem.

\section*{Organization of the paper}

\noindent
Section 1 has the preliminary results and definitions necessary to prove Theorem \ref{thm::mainTheorem}, and Section 2 is where its proof is presented. Section 3 studies some distinguished families $det(a,b)$. In Section 4, we investigate determinantal quartic surfaces using the Noether-Lefschetz theory of K3 surfaces and prove Theorem \ref{thm::quarticDivisors}. We work over $\mathbb{C}$ throughout. 

\section*{acknowledgments}
\noindent We thank Lara Bossinger, Antonio Laface and Bernd Sturmfels for useful conversations. Special thanks to Angelo Felice Lopez, Joe Harris, Anand Patel and Edoardo Sernesi for their generosity in sharing their insights with us, and for pointing out important literature on the subject. CLH is currently a CONAHCYT Research Fellow in Mathematics, project No. 1036, and has been partially supported by CONAHCYT, grant ``Estancias sab\'aticas en el extranjero 2023'' No. I1200/311/2023. Also, he thanks the department of Mathematics at Harvard for the generous hospitality during his sabbatical visit. During the preparation of this note MV and ML were funded by CONAHCYT under the program of ``Becas de Posgrado''. We also thank the referees for their careful review and valuable suggestions.

\section{Determinantal surfaces in $\PP^3$}\label{section::2}
\noindent
This section collects the preliminary results and definitions needed in the proof of Theorem \ref{thm::mainTheorem}. We start by analyzing the irreducibility of the families $det(a,b)$ (see Definition \ref{def::detAB}) and showing that the subset of $det(a, b)$ parametrizing smooth surfaces is not empty. We will denote this subset by $det(a, b)^{sm}$.

\begin{prop}\label{prop1}
    Let $(a,b)$ be an admissible pair. Then $det(a,b)$ is irreducible and the general element in $det(a,b)$ is a smooth surface.
\end{prop}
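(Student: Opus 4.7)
The plan is to prove irreducibility by parametrizing $det(a,b)$ via an affine space of matrices, and to prove generic smoothness by analyzing the universal determinantal hypersurface.

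\textbf{Irreducibility.} I would realize $det(a,b)$ as the closure of the image of the rational map $\det: \mathcal{M}(a,b) \dashrightarrow |\OO_{\PP^3}(d)|$, $S \mapsto [\det(S)]$, where
$$\mathcal{M}(a,b) := \bigoplus_{1 \leq i,j \leq t} H^0\bigl(\PP^3, \OO_{\PP^3}(b_j - a_i)\bigr)$$
is the space of matrices of the prescribed degree pattern (summands with $b_j < a_i$ are zero, those with $b_j = a_i$ are one-dimensional). Since $\mathcal{M}(a,b)$ is an affine space, hence irreducible, both its image and its closure $det(a,b)$ are irreducible.

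\textbf{Smoothness.} By upper semicontinuity of the singular locus, it suffices to produce a single smooth member of $det(a,b)$. I would form the universal determinantal hypersurface
$$\widetilde{X} := \bigl\{(S,p) \in \mathcal{M}(a,b) \times \PP^3 : \det(S)(p) = 0\bigr\} \xrightarrow{\ \pi\ } \mathcal{M}(a,b)$$
and apply generic smoothness of $\pi$. By cofactor expansion, the partial derivative of $\det(\widetilde{S})$ with respect to a coefficient of the entry $m_{ij}$ is, up to the value of a monomial at $p$, the $(i,j)$-cofactor $C_{ij}(p)$ of $S$. Hence $\widetilde{X}$ is smooth at every $(S,p)$ for which some $C_{ij}(p)$ is non-zero with $b_j \geq a_i$; the admissibility inequalities $a_i < b_i$ together with the monotonicity of $a$ and $b$ ensure $b_j \geq a_i$ for $j \geq i$, so the upper-triangular cofactors are available. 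A codimension count on the locus where all such $C_{ij}(p)$ vanish --- a rank-drop condition on $S(p)$ within the staircase subspace $\mathrm{ev}_p(\mathcal{M}(a,b)) \subset \mathbb{C}^{t \times t}$ --- should then yield that this bad locus has codimension at least $4$ in $\mathcal{M}(a,b) \times \PP^3$, so its projection to $\mathcal{M}(a,b)$ is a proper subvariety. For generic $S$ the fiber $\widetilde{X}_S$ lies entirely in the smooth locus of $\widetilde{X}$, and generic smoothness delivers a smooth $\widetilde{X}_S \subset \PP^3$.

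\textbf{Main obstacle.} The delicate step is the codimension estimate on the bad locus: because $\mathcal{M}(a,b)$ imposes a staircase zero-pattern on matrices, the image of $\mathrm{ev}_p$ is a proper linear subspace of $\mathbb{C}^{t \times t}$, and the classical codim-$4$ formula for the rank-$\leq t-2$ locus need not persist in this restricted setting. One must invoke the positivity $b_i - a_i \geq 1$ of the diagonal degrees to ensure that evaluation at a generic $p$ hits a sufficiently generic subspace. An alternative that sidesteps this estimate entirely is to construct a smooth member of $det(a,b)$ explicitly using the Hilbert-Burch theorem: adding a row of degrees $b_j - d$ to $S$ produces a $(t+1) \times t$ matrix whose generic cokernel is the ideal sheaf of a smooth ACM curve, and a Bertini argument applied to the linear system of resulting $t \times t$ minors in $|\mathcal{I}_C(d)|$ yields a smooth determinantal surface of type $(a,b)$.
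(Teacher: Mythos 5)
Your irreducibility argument is exactly the paper's: $det(a,b)$ is the closure of the image of the affine space of matrices under $\det$, hence irreducible. The problem is the smoothness half, where both of your routes stop short of an actual proof. Everything reduces (as you say) to exhibiting one smooth member, but Route A leaves precisely that reduction's crux --- the codimension $\geq 4$ estimate for the locus where all usable cofactors vanish --- as an unverified ``main obstacle.'' This is not a technicality one can wave at: for admissible pairs in which $b_j<a_i$ for some $i>j$ (e.g.\ $a=(0,5)$, $b=(1,8)$, which satisfies Definition~1), the forced zero entries make $\det(S)$ factor as $m_{11}m_{22}$ for \emph{every} $S$, so the general member is reducible and singular and your bad locus has small codimension. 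So the estimate genuinely fails without an additional positivity hypothesis on the degrees $b_j-a_i$ (the paper itself tacitly assumes such positivity throughout, e.g.\ in invoking Geramita--Migliore), and in the honest cases it still requires an argument about how $\mathrm{ev}_p(\mathcal{M}(a,b))$ sits inside $\mathbb{C}^{t\times t}$ that you have not supplied.

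Route B does not rescue this: with the normalization $a_0=d<a_1$ the resolution gives $h^0(\mathcal{I}_C(d))=1$, and the only $t\times t$ minor of degree $d$ is $F_0=\det(S)$ itself, so the ``linear system of minors in $|\mathcal{I}_C(d)|$'' is a single point and Bertini says nothing. (Varying $C$ instead of the surface puts you right back at Route A.) The paper sidesteps all of this with a one-line construction you should adopt: take $S$ to be the cyclic matrix with diagonal entries $f_i$ of degree $b_i-a_i$ and sub/corner entries $g_i$, chosen so that $\prod_i f_i=x^d+y^d$ and $\prod_i g_i=\pm(z^d+w^d)$ (possible because $x^d+y^d$ and $z^d+w^d$ split into linear factors that can be grouped into factors of any prescribed positive degrees summing to $d$). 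Then $\det(S)$ is a Fermat-type polynomial defining a smooth surface, and openness of smoothness finishes the proof.
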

\begin{proof}
Let us consider
    $$V:=\bigoplus_{1\leq i,j\leq t}H^0(\PP^3,\OO_{\PP^3}(b_j-a_i))$$
    as the vector space of matrices $S$ with entries homogeneous polynomials of degree $b_j-a_i$. Then, by taking the determinant one gets a rational map
    $$det:V\dashrightarrow|\OO_{\PP^3}(d)|$$
    dominating $det(a,b)$, which proves the irreducibility. The second part follows from the fact that smoothness is an open condition. Thus, it is enough to exhibit a matrix realizing a smooth surface. One such matrix is
    \begin{align*}
        S:=\left(
        \begin{array}{ccccc}
            f_1 & 0 & \ldots & 0 & g_t\\
            g_1 & f_2 & \ldots & 0 & 0\\
            0 & g_2 & \ldots & 0 & 0\\
            \vdots & \vdots & \ddots & \vdots & \vdots\\
            0 & 0 & \ldots & f_{t-1} & 0\\
            0 & 0 & \ldots & g_{t-1} & f_t
        \end{array}
        \right)
    \end{align*}
    where $\prod_i f_i=x^d+y^d$ and $(-1)^t\prod_i g_i=z^d+w^d$. In fact, in this case $det(S)=x^d+y^d+z^d+w^d$ determines the Fermat surface of degree $d$, which is smooth.
\end{proof}

\noindent
The proof above exemplifies that it is possible to produce surfaces with high Picard rank by specializing the defining matrix $S$. It would be interesting to know how this specialization might work in a systematic way.

\subsection{ACM curves contained in determinantal surfaces}
\noindent
Given a determinantal surface $X\subset \PP^3$, there are curves contained in $X$ with good cohomological properties. By computing the dimension of the maximal family of these curves, we will calculate the dimension of the family of determinantal surfaces. We describe such curves next.

\begin{definition}\label{def1}
    A curve $C\subset\PP^3$ is called \textit{arithmetically Cohen-Macaulay}, or ACM, if its ideal sheaf admits a minimal free resolution of the form
    \begin{align}\label{eq::ACMresolution}
        0\to\bigoplus_{j=1}^t\OO_{\PP^3}(-b_j)\xrightarrow{M}\bigoplus_{i=0}^t\OO_{\PP^3}(-a_i)\to\mathcal{I}_C\to0.
    \end{align}
\end{definition}

\medskip\noindent
Given a curve $C$ as in (\ref{eq::ACMresolution}), we denote its degree by $d_C$, and by $g_C$ its genus. These numbers only depend on the graded Betti numbers $a_i$ and $b_j$, and one can explicitly compute them:
\begin{equation}\label{eq::degreeAndGenus}
    d_C=\frac{1}{2}\left(\sum_{j=1}^tb_j^2-\sum_{i=0}^ta_i^2\right), \quad \quad \quad
       g_C=1+\frac{1}{6}\left(\sum_{j=1}^tb_j^3-\sum_{i=0}^ta_i^3\right)-2 d_C.
\end{equation}

\medskip\noindent
We focus on the family of smooth ACM curves in the Hilbert scheme of curves of degree $d_C$ and genus $g_C$, with graded Betti numbers as in (\ref{eq::ACMresolution}). We denote this locus by $\mathcal{H}_{a,b}$ and its dimension can be computed by the following result. See also \cite[Prop. 3.3]{BS}.

\begin{thm}\cite[Th. 2]{ellingsrud}\label{thm::timFormula}.
    Let $a_0\leq\ldots\leq a_t$ and $b_1\leq\ldots\leq b_t$ be integers such that $\sum_{j=1}^tb_j=\sum_{i=0}^ta_i.$ Denote
    \begin{align*}
        A=\bigoplus_{i=0}^t\OO_{\PP^3}(-a_i), \quad \quad B=\bigoplus_{j=1}^t\OO_{\PP^3}(-b_j)
    \end{align*}
    and assume that there exist curves $C$ with free resolution (\ref{eq::ACMresolution}). Then the family $\mathcal{H}_{a,b}$ of such curves has dimension
    $$\mbox{dim}\ \mathcal{H}_{a,b}=hom(B,A)+hom(A,B)-hom(A,A)-hom(B,B)+1,$$
    where $hom(X,Y):=h^0(\PP^3, Hom_{\OO_{\PP^3}}(X,Y)).$
\end{thm}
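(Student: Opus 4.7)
The plan is to realize $\mathcal{H}_{a,b}$ as the set-theoretic quotient of an open subset of $\Hom(B,A)$ by the action of a natural group, and then to compute the resulting dimension by orbit-stabilizer.

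First I would parameterize. Let $U\subset \Hom(B,A)$ be the open subset of morphisms $M$ that are injective as sheaf maps and whose cokernel is the ideal sheaf of a smooth ACM curve with minimal free resolution (\ref{eq::ACMresolution}); by hypothesis $U$ is non-empty, and $\dim U = hom(B,A)$. The group $G:=\text{Aut}(A)\times \text{Aut}(B)$, being Zariski-open in the finite-dimensional algebra $\text{End}(A)\times \text{End}(B)$, has dimension $hom(A,A)+hom(B,B)$. It acts on $U$ by $(g,h)\cdot M = gMh^{-1}$. By uniqueness (up to isomorphism) of minimal free resolutions of the coherent sheaf $\mathcal{I}_C$, two elements of $U$ produce the same ACM curve if and only if they lie in the same $G$-orbit, so $\dim \mathcal{H}_{a,b}= \dim U - \dim (G\cdot M)$ for generic $M$.

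The heart of the proof is the stabilizer computation. Given $M\in U$ with cokernel $\mathcal{I}_C$, each $(g,h)\in G_M$ satisfies $gM=Mh$ and therefore induces an endomorphism of $\mathcal{I}_C$. Since $C$ is a connected codimension-$2$ subscheme of $\PP^3$ we have $\text{End}(\mathcal{I}_C)\cong \mathbb{C}$, and the induced map $G_M\to \mathbb{C}$ is surjective through the diagonal scalars $(\lambda\,\text{Id}_A,\lambda\,\text{Id}_B)$. An element of its kernel is a pair with $g$ factoring as $g=M\phi$ for some $\phi\in \Hom(A,B)$; the relation $gM=Mh$ combined with the injectivity of $M$ then forces $h=\phi M$, and the assignment $\phi\mapsto (M\phi,\phi M)$ is itself injective. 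Thus the kernel is naturally identified with $\Hom(A,B)$, giving $\dim G_M = hom(A,B)+1$.

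Combining these two steps via orbit-stabilizer yields $\dim (G\cdot M) = hom(A,A)+hom(B,B)-hom(A,B)-1$, and therefore
$$\dim \mathcal{H}_{a,b} \;=\; hom(B,A)+hom(A,B)-hom(A,A)-hom(B,B)+1,$$
as claimed. The main obstacle is justifying rigorously that $\mathcal{H}_{a,b}$ is a geometric quotient of $U$ by $G$ with generic orbit of the computed dimension; an alternative route, closer to Ellingsrud's original, is to invoke the Peskine--Szpiro liaison theorem to conclude that the Hilbert scheme is smooth at an ACM curve $C$ and then to compute $h^0(N_{C/\PP^3})$ directly from the resolution (\ref{eq::ACMresolution}) via the mapping cone, arriving at the same numerical count.
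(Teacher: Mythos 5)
The paper does not prove this statement: it is imported verbatim from Ellingsrud \cite{ellingsrud} (with a pointer to \cite{BS}, whose Prop.~3.3 proves essentially this formula by essentially your method), so there is no in-paper argument to compare against. Your orbit--stabilizer approach is the standard proof and the count is right: $\dim U=hom(B,A)$, $\dim G=hom(A,A)+hom(B,B)$, $\dim G_M=hom(A,B)+1$, and fibres of $U\to\mathcal{H}_{a,b}$ are $G$-orbits by uniqueness of minimal free resolutions, whence the formula. Two points need tightening. First, in the stabilizer: the pairs acting trivially on $\mathcal{I}_C$ are $(\mathrm{Id}_A+M\phi,\ \mathrm{Id}_B+\phi M)$ with $\phi\in\mathrm{Hom}(A,B)$, not pairs with $g=M\phi$ --- such a $g$ would induce the \emph{zero} endomorphism of $\mathcal{I}_C$ and could not be invertible. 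With this correction your injectivity/surjectivity argument identifies this subgroup with the vector group $\mathrm{Hom}(A,B)$, but you must also check that $\mathrm{Id}_A+M\phi$ and $\mathrm{Id}_B+\phi M$ are actually automorphisms; this holds because $M\phi$ and $\phi M$ are nilpotent, by a degree count using that every nonzero entry of $M$ has strictly positive degree. Second, that minimality is not free: the fibre-equals-orbit claim uses uniqueness of the \emph{minimal} resolution, so $U$ must be restricted to matrices with no nonzero degree-zero entries. When $b_j>a_i$ for all $i,j$ (the only case used in this paper, by Proposition \ref{prop::keyObservation}) this restriction is vacuous and $\dim U=hom(B,A)$ as you claim; in general one must either impose it or note that $G$-conjugation preserves minimality so the minimal locus is a union of orbits of the stated codimension. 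Finally, the ``geometric quotient'' worry you raise is not needed for a dimension count: the constructible surjection $U\to\mathcal{H}_{a,b}$ onto the irreducible variety $\mathcal{H}_{a,b}$ (Proposition \ref{prop::Migliore}) together with the theorem on fibre dimensions already yields $\dim\mathcal{H}_{a,b}=\dim U-\dim(G\cdot M)$.
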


\medskip\noindent
    Throughout the paper we will further assume that $a_i < b_j$ for all $i,j$. This ensures that there exist curves with resolution \eqref{eq::ACMresolution}, and this resolution is minimal.

\begin{remark}\label{rmk::expressionHomAB}
    The dimension of $\mathcal{H}_{a,b}$ above can be written explicitly in terms of the $a_i$'s and $b_j$'s. For example:
    $$hom(B,A)=\sum_{\substack{0\leq i\leq t\\ 1\leq j\leq t}}\binom{b_j-a_i+3}{3}.$$
\end{remark}

\noindent
We now state a particular case of \cite[Th. 2]{ellingsrud}, adapted for our purposes.
\begin{prop}\label{prop::Migliore}
    Suppose that, in (\ref{eq::ACMresolution}), $a_i<b_j$ for all $i,j$. Then the family $\mathcal{H}_{a,b}$ of ACM curves with minimal free resolution (\ref{eq::ACMresolution}) is an irreducible, smooth open set of the Hilbert scheme $Hilb_{d_C,g_C}^{\PP^3}$.
\end{prop}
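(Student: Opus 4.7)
The plan is to establish three separate properties---openness in the Hilbert scheme, smoothness as a scheme, and irreducibility---and then combine them. I would organize the argument as follows.

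First, for openness, I would observe that ACM-ness itself is an open condition on $\mathrm{Hilb}^{\PP^3}_{d_C,g_C}$: a curve $C\subset\PP^3$ is ACM if and only if $H^1(\PP^3,\mathcal{I}_C(n))=0$ for every $n\geq 0$, which by Castelnuovo--Mumford regularity reduces to finitely many vanishings, each of which is open by semicontinuity. On the open ACM locus, the graded Betti numbers of the homogeneous coordinate ring are determined by the Hilbert polynomial (locally constant by flatness), so fixing the minimal free resolution to be of type $(a,b)$ cuts out a further open subscheme $\mathcal{H}_{a,b}$.

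Second, for smoothness, I would invoke Ellingsrud's theorem, cited as Theorem \ref{thm::timFormula} above, which asserts that at every ACM curve with minimal free resolution of the shape (\ref{eq::ACMresolution}) the Hilbert scheme is smooth of the stated dimension. Combined with openness, this immediately gives that $\mathcal{H}_{a,b}$ is smooth as a scheme.

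Third, for irreducibility, I would exploit the matrix parametrization. Let
$$V := \bigoplus_{i,j} H^0\bigl(\PP^3,\OO_{\PP^3}(b_j-a_i)\bigr),$$
viewed as the space of sheaf morphisms $M:B\to A$, and let $V^\circ\subset V$ denote the open subset where $M$ is injective and its cokernel is the ideal sheaf of a smooth ACM curve with resolution exactly (\ref{eq::ACMresolution}). The assignment $M\mapsto \mathrm{coker}(M)$ yields a morphism $\Phi:V^\circ\to\mathcal{H}_{a,b}$ whose image is all of $\mathcal{H}_{a,b}$, since the defining resolution of any curve in $\mathcal{H}_{a,b}$ corresponds to some $M\in V^\circ$, unique up to the natural $\mathrm{Aut}(A)\times\mathrm{Aut}(B)$-action. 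Because $V$ is an irreducible affine space, so is $V^\circ$, and hence its image $\mathcal{H}_{a,b}$.

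The main obstacle is verifying surjectivity of $\Phi$, i.e.\ that every curve in $\mathcal{H}_{a,b}$ genuinely comes from some matrix $M\in V^\circ$. This hinges on a Hilbert--Burch-type argument, which underlies the liaison-theoretic treatment in \cite[pp. 95--98]{mig2}; a secondary worry is nonemptiness of $V^\circ$, which is secured by the hypothesis $a_i<b_j$ implicit in the admissible-pair setting together with \cite[Theorem 2.1]{GM} quoted above.
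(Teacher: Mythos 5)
The paper does not actually prove this proposition---it is quoted wholesale from \cite[pp.~95--98]{mig2}---so your three-step reconstruction (openness, smoothness via Ellingsrud, irreducibility via the matrix parametrization) is the right template, and your smoothness and irreducibility steps are sound modulo routine points (for instance, your $V^\circ$ should not insist that the cokernel be a \emph{smooth} curve, since $\mathcal{H}_{a,b}$ as used later in the paper contains singular members). The genuine gap is in the openness step. The assertion that on the ACM locus ``the graded Betti numbers are determined by the Hilbert polynomial'' is false: the Hilbert polynomial does not determine the Hilbert function of an ACM curve, and the Hilbert function does not determine the graded Betti numbers, which can acquire cancelling ``ghost'' pairs $a_i=b_j$ on proper closed subloci. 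What is true, and what you actually need, is weaker: on the ACM locus the full Hilbert function is locally constant, because $h^0(\mathcal{I}_C(n))$ and $h^2(\mathcal{I}_C(n))=h^1(\mathcal{O}_C(n))$ are each upper semicontinuous while their sum is pinned down by the Hilbert polynomial once $h^1(\mathcal{I}_C(n))=0$; the graded Betti numbers are then only \emph{upper semicontinuous}, so $\mathcal{H}_{a,b}$ is a priori merely locally closed in $Hilb^{\PP^3}_{d_C,g_C}$.

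It becomes open exactly when $(a,b)$ are the generic Betti numbers for that Hilbert function, i.e.\ when no $a_i$ coincides with any $b_j$. This hypothesis is not supplied by admissibility, which only gives $a_i<b_i$: for instance $a=(5,6)$, $b=(6,8)$ is an admissible pair of degree $3$ (so $a_0=3$), the corresponding curves have degree $15$, genus $31$, and the same Hilbert function as complete intersections of type $(3,5)$ (the $\OO_{\PP^3}(-6)$ summands cancel in the alternating sum); such curves are specializations of those complete intersections, so this $\mathcal{H}_{a,b}$ is a proper closed subset of an irreducible component of $Hilb^{\PP^3}_{15,31}$, not an open one. Hence your closing remark that the inequality $a_i<b_j$ is ``implicit in the admissible-pair setting'' is precisely where the argument breaks: you must impose the no-ghost condition $a_i<b_j$ for all $i,j$ explicitly (it does hold for the pairs the paper actually uses, and the same condition is what makes your $V^\circ$ open in $V$, since a degree-zero matrix entry forced to vanish is a closed, not open, condition). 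With that hypothesis added, the openness argument can be repaired along the lines above and the rest of your proof goes through.
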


\medskip\noindent
The minimal free resolution of a curve $C$ in Theorem \ref{thm::timFormula} determines a matrix
\begin{equation*}
    M=\left(\begin{array}{cccc}
        m_{01} & m_{02} & \ldots & m_{0t}\\
        m_{11} & m_{12} & \ldots & m_{1t}\\
        \vdots & \vdots & \ddots & \vdots\\
        m_{t1} & m_{t2} & \ldots & m_{tt}
    \end{array}\right),
\end{equation*}
where $m_{ij}$ is an homogeneous polynomial of degree $b_j-a_i$. If $a_i<b_j$ for all $i,j$, as in Proposition \ref{prop::Migliore}, then ACM curves are smooth.

\begin{prop}\cite[Th. 6.2]{Peskine  }\label{prop::smoothCurve}
    If $C$ is a curve defined by a minimal free resolution as in (\ref{eq::ACMresolution}) where $a_i<b_j$ for all $i,j$ and $M$ is general, then $C$ is smooth and irreducible.
\end{prop}

\medskip\noindent

\begin{remark}\label{rmk::detSurfACM}
    Given a curve $C$ with minimal resolution (\ref{eq::ACMresolution}), the $t\times t$-minors $F_i$ obtained by removing the $i$-th row of $M$ form a set of minimal generators for the ideal $\mathcal{I}_C$. In particular, $\{F_i=0\}$ is a determinantal surface of type $a=(a_0,\ldots,\hat{a_i},\ldots,a_t), b=(b_1,\ldots,b_t)$ containing $C$. Thus, ACM curves are naturally contained in determinantal surfaces.
\end{remark}

\medskip\noindent
We now define two types of ACM curves that a determinantal surface always contains. We denote them by $C_+$ and $C_-$.

\medskip\noindent
First consider a general surface $X\in det(a,b)$ given by a matrix $S$ as in (\ref{eq::determinantalSurface}). Assume that $a_1>d$ and set $a_0:=d$. Let us consider a row
$$R=(m_{01}, m_{02}, \ldots,m_{0t}),$$
where $m_{0j}$ stands for a general homogeneous polynomial of degree $deg(m_{0j})=b_j-a_0$. Then the minors of the matrix
$$S_+:=\left(
\begin{array}{c}
R\\
S
\end{array}
\right)$$
define an ACM curve $C_+\subset X$ with minimal free resolution (\ref{eq::ACMresolution}). We say that $C_+$ is a curve in $X$ \textit{obtained by adding a row}.

\medskip\noindent
Similarly, consider the matrix
$$S_-:=\left(
\begin{array}{cccc}
    m_{11} & m_{12} & \ldots & m_{1,t-1}\\
    m_{21} & m_{22} & \ldots & m_{2,t-1}\\
    \vdots & \vdots & \ddots & \vdots\\
    m_{t1} & m_{t2} & \ldots & m_{t,t-1}
\end{array}
\right)$$
obtained by removing the last column of $S$. The $(t-1)\times(t-1)$-minors of $S_-$ define an ACM curve $C_-\subset X$ with minimal free resolution
\begin{equation}\label{eq::ACMresolution2}
    0\to\bigoplus_{j=1}^{t-1}\OO_{\PP^3}(-d+b_t-b_j)\to
    \bigoplus_{i=1}^t\OO_{\PP^3}(-d+b_t-a_i)\to\mathcal{I}_{C_-}\to0.
\end{equation}
We say that $C_-$ is a curve in $X$ \textit{obtained by removing a column}. 

\medskip\noindent
The general determinantal surface always contains curves obtained either by adding a row or removing a column, and the general such curve is smooth and irreducible by Proposition \ref{prop::smoothCurve}. Let us now prove that if a surface contains a curve like these, then it is determinantal.

\begin{prop}(Determinantal criterion)\label{prop::keyObservation}
    Let $(a, b)$ be an admissible pair of degree $d$ with $a_1>d$, and let $C_+, C_-$ be curves with minimal free resolution (\ref{eq::ACMresolution}) and (\ref{eq::ACMresolution2}) respectively. Then any degree $d$ surface $X=\{F=0\}$ containing $C_+$ or $C_-$ is determinantal of type $(a,b)$. Moreover, in the first case $X$ is the unique surface of degree $d$ containing $C_+$. In particular, $F$ is a minimal generator of the ideal $\mathcal{I}_{C_+}$.
\end{prop}
\begin{proof}
     When $C_+$ has resolution (\ref{eq::ACMresolution}), we saw that $X$ is determinantal in Remark \ref{rmk::detSurfACM}. The claim that $X$ is the only degree surface containing $C_+$ follows from the assumption $a_1>d$.
     
     For the other case, notice that $C_-\subset X$ is equivalent to having an expression
    $$F=\sum_{i=1}^t(-1)^im_{it}F_i$$
    where the $F_i$ are the minimal generators of $\mathcal{I}_{C_-}$; that is, the minors of the matrix $S_-$ defining $C_-$. If we define a matrix $S$ by completing $S_-$ with these $m_{it}$, expanding the determinant of $S$ with respect to the last column gives the expression $det(S)=\pm F$, proving $X\in det(a,b)$.
\end{proof}

\medskip\noindent
Now we combine Proposition \ref{prop::keyObservation} with the following result in order to compute the Picard rank of a general determinantal surface.
\begin{thm}\cite[Cor. II.3.8]{Angelo} \label{Angelo}
    Let $C\subset\PP^3$ be a smooth curve and let $m(C)$ be the maximum degree among a minimal set of generators of $\mathcal{I}_C$. Let
    $$d\geq\max\{4,m(C)+1\}.$$
 Then the general surface of degree $d$ containing $C$ has Picard group isomorphic to $\mathbb{Z}^2$, with generators $\OO_X(C)$ and $\mathcal{O}_X(1)$.
\end{thm}

\begin{cor}\label{cor::detInNL}
    If $d\geq4$, then the general $X\in det(a,b)$ has Picard group isomorphic to $\mathbb{Z}^2$. In particular, every degree $d$ smooth determinantal surface is contained in $NL(d)$.
\end{cor}
\begin{proof}
    Let $(a,b)$ be an admissible pair and consider a general curve $C_-$ with minimal free resolution (\ref{eq::ACMresolution2}). Since every minimal generator of $\mathcal{I}_{C_-}$ is of degree strictly less than $d$, it follows from Theorem \ref{Angelo} that the general degree $d$ surface $X$ containing $C_-$ has Picard group $\mathbb{Z}^2$. By Proposition \ref{prop::keyObservation}, the general $X\in det(a,b)$ is obtained this way.
\end{proof}

\medskip\noindent
Fix a degree $d$ admissible pair with $a_1>d$. The previous argument proves that the Picard group of the general $X\in det(a,b)$ is generated by $\OO_X(C_-)$ and $\OO_X(1)$, where $C_-\subset X$ is a curve obtained by removing a column. We could also consider a curve $C_+\subset X$ obtained by adding a row, and it turns out that $\OO_X(C_+)$ and $\OO_X(1)$ is again a basis for $Pic(X)$.

\begin{prop}
    If $d\geq4$, $X\in det(a,b)$ is general and $C_+\subset X$ is obtained by adding a row, then $Pic(X)$ is generated by $C_+$ and the hyperplane class $H$.
\end{prop}
\begin{proof}
    Consider the matrix $S_+$ defining the curve $C_+$. For each $u\in\mathbb{C}$, define
    \begin{equation*}
        S_u:=\left(
        \begin{array}{ccccc}
            u\cdot m_{01} & u\cdot m_{02} & \ldots & u\cdot m_{0,t-1} & m_{0t}\\
            m_{11} & m_{12} & \ldots & m_{1,t-1} & m_{1t}\\
            \vdots & \vdots & \ddots & \vdots & \vdots\\
            m_{t1} & m_{t2} &\ldots & m_{t,t-1} & m_{tt}
        \end{array}
        \right)
    \end{equation*}
    and let $C_u$ be the scheme defined by the $t\times t$ minors of $S_u$. Observe that $C_1=C_+$ and $C_0 = C_-\cup\{m_{0t}=0\}$, and in addition this construction makes $C_0$, $C_1$ rationally equivalent in $X$. Since $\deg(m_{0t})=b_t-d$, then we have that $C_0=C_-+(b_t-d)H$, as divisor classes. Since $H^1(X, \OO_X)=0$, therefore, $C_+$ and $C_-+(b_t-d)H$ are linearly equivalent. This completes the proof.
\end{proof}

\begin{remark}\label{refereeExample}
If we replace in Definition \ref{def::admissiblePair} the condition $(iii)$ with $ a_{i+1}\le b_i$, then Corollary \ref{candites} may fail. For example, consider the pair $a=(6,6,8)$, $b=(7,9,9)$ and let $X$ be a generic element in $det(a,b)$. In this case, $C_{-}$ is a reducible curve (the union of a plane quartic and a line), and consequently our arguments break down. Indeed, $X$ has Picard rank at least 3 and $det(a,b)$ fails to be a component of the Noether-Lefschetz $NL(5)$. In fact, $det(a,b)$ parametrizes generically quintics that contain two skew lines.
\end{remark}

\section{Proof of Theorem \ref{thm::mainTheorem}}
\noindent
In this section, we answer Question \ref{qtn::question1} by proving Theorem \ref{thm::mainTheorem}. From now on, we will assume that $(a,b)$ is an admissible pair of degree $d$ and length $t$ with $a_1>d$. Imposing this last inequality is not restrictive and will simplify our computations. Set $a_0:=d$. Take a general element $X=\{F=0\}\in det(a,b)$ with $F=det(S)$ for a matrix $S$ as in (\ref{eq::determinantalSurface}). Let us consider a $1\times t$ matrix
\begin{align*}
    R=\left(
        \begin{array}{cccc}
            m_{01} & m_{02} & \ldots & m_{0t}
        \end{array}
    \right)
\end{align*}
and the curve $C\subset X$ obtained by adding the row $R$. We denote by $d_C$ and $g_C$ the degree and genus of $C$, respectively. In order to compute the dimension of $det(a,b)$ we need the following lemma which exhibits the cohomological properties of $C$.
\begin{lem}\label{lem::goodCohomologicalProperties}
    If $X$ and $C$ are as above and smooth, then $H^1(X,\mathcal{O}_{X}(C))=H^2(X,\mathcal{O}_{X}(C))=0$.
\end{lem}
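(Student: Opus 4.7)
The plan is to convert both vanishings into statements about the ideal $\mathcal{I}_C$ in $\PP^3$ via Serre duality and a restriction sequence, and then quote the two main features of $C$ coming from the setup: ACM-ness and minimality of the degree $d$ among surfaces through $C$.

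First I would apply Serre duality on the smooth degree-$d$ surface $X$, where $K_X\cong\mathcal{O}_X(d-4)$. Since $C$ is an effective Cartier divisor on the smooth surface $X$, one has $\mathcal{O}_X(-C)\cong\mathcal{I}_{C/X}$, and therefore
$$H^1(X,\mathcal{O}_X(C))\cong H^1(X,\mathcal{I}_{C/X}(d-4))^\vee,\qquad H^2(X,\mathcal{O}_X(C))\cong H^0(X,\mathcal{I}_{C/X}(d-4))^\vee.$$
So it suffices to show $H^0(X,\mathcal{I}_{C/X}(d-4))=H^1(X,\mathcal{I}_{C/X}(d-4))=0$.

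Next I would use the restriction short exact sequence on $\PP^3$,
$$0\to\mathcal{O}_{\PP^3}(-d)\to\mathcal{I}_C\to\mathcal{I}_{C/X}\to0,$$
twisted by $\mathcal{O}_{\PP^3}(d-4)$. Since $H^i(\PP^3,\mathcal{O}_{\PP^3}(-4))=0$ for $i=0,1,2$, the associated long exact sequence yields isomorphisms $H^i(\PP^3,\mathcal{I}_C(d-4))\cong H^i(X,\mathcal{I}_{C/X}(d-4))$ for $i=0,1$. The problem is thereby reduced to the two vanishings
$$H^0(\PP^3,\mathcal{I}_C(d-4))=0\quad\text{and}\quad H^1(\PP^3,\mathcal{I}_C(d-4))=0.$$

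For the first, I would invoke Proposition~\ref{prop::keyObservation}: $d$ is the minimal degree of a hypersurface containing $C$, so no form of degree $d-4<d$ vanishes on $C$. For the second, I would simply feed the ACM resolution (\ref{eq::ACMresolution}) into cohomology after twisting by $\mathcal{O}_{\PP^3}(d-4)$: because $A$ and $B$ are direct sums of line bundles on $\PP^3$ we have $H^1(\PP^3,A(k))=0$ and $H^2(\PP^3,B(k))=0$ for every $k\in\ZZ$, whence $H^1(\PP^3,\mathcal{I}_C(k))=0$ for every $k$ (this ``intermediate-cohomology vanishing" is exactly the hallmark of ACM curves).

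I do not expect a serious obstacle here; the argument is essentially bookkeeping once the right reduction is chosen. The only subtle points are the use of $\mathcal{O}_X(-C)\cong\mathcal{I}_{C/X}$, which requires $X$ smooth so that $C$ is Cartier, and the minimality statement $H^0(\mathcal{I}_C(d-4))=0$, which is given for free by the standing hypothesis $a_1>d$ through Proposition~\ref{prop::keyObservation}.
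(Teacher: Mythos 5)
Your argument is correct, and it uses exactly the same two inputs as the paper --- the vanishing $H^0(\PP^3,\mathcal{I}_C(d-4))=0$ coming from the minimality of $d$ (Proposition \ref{prop::keyObservation}), and the vanishing $H^1(\PP^3,\mathcal{I}_C(d-4))=0$ coming from the ACM resolution (\ref{eq::ACMresolution}) --- but it packages them differently for the $H^1$ statement. The treatment of $H^2(X,\mathcal{O}_X(C))$ is in substance identical to the paper's: both reduce, via Serre duality and $K_X=(d-4)H$, to the absence of degree $d-4$ forms vanishing on $C$. For $H^1(X,\mathcal{O}_X(C))$ the paper instead runs the sequence $0\to\mathcal{O}_X\to\mathcal{O}_X(C)\to\mathcal{O}_C(C)\to0$ and concludes by a dimension count: it identifies both $H^1(C,\mathcal{O}_C(C))$ and $H^2(X,\mathcal{O}_X)$ with $H^0(\PP^3,\mathcal{O}_{\PP^3}(d-4))$ (the latter identification for the curve again using $H^0(\mathcal{I}_C(d-4))=H^1(\mathcal{I}_C(d-4))=0$), so the surjection between them must be an isomorphism and the kernel $H^1(X,\mathcal{O}_X(C))$ vanishes. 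You dualize on $X$ first and then pass to $\PP^3$ through $0\to\mathcal{O}_{\PP^3}(-d)\to\mathcal{I}_C\to\mathcal{I}_{C/X}\to0$, which turns both vanishings into direct statements about $\mathcal{I}_C(d-4)$ with no counting needed; this is a slightly cleaner route and makes it transparent that ACM-ness is precisely what kills $H^1$. Your flagged subtleties ($C$ Cartier on the smooth $X$, and the hypothesis $a_1>d$ feeding the minimal-degree claim) are exactly the right ones.
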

\begin{proof}
    Let $H$ denote the hyperplane class of  $X$. Then the canonical class of $X$ is $K_X=(d-4)H$ and $H^2(X,\mathcal{O}_{X}(C))\cong H^0(X,\mathcal{O}_{X}((d-4)H-C))$ embeds into $H^0(X,\mathcal{O}_{X}((d-4)H)).$ Since the restriction map
    $$H^0(\PP^3,\OO_{\PP^3}(d-4))\to H^0(X,\mathcal{O}_{X}((d-4)H))$$
    is surjective, every element in $H^0(X,\mathcal{O}_{X}((d-4)H-C))$ comes from a degree $d-4$ form vanishing on $C$. Given that $d$ is the minimum degree of a surface containing $C$, then $H^2(X,\mathcal{O}_{X}(C))\cong H^0(X,\mathcal{O}_{X}((d-4)H-C))=0$.

    Now we turn our attention to $H^1(X,\mathcal{O}_{X}(C))$. Consider the short exact sequence
    $$0\to\OO_X\to\OO_X(C)\to\OO_C(C)\to0.$$
    Since $H^1(X,\OO_X)=H^2(X,\mathcal{O}_{X}(C))=0$ we obtain an exact sequence
    $$0\to H^1(X,\mathcal{O}_{X}(C))\to H^1(C,\OO_C(C))\to H^2(X,\OO_X)\to 0.$$
    The adjoint linear series and Serre duality imply
    $$H^2(X,\OO_X)\cong H^0(\PP^3,\OO_{\PP^3}(d-4)).$$
    On the other hand, since the canonical sheaf of $C$ is equal to $\omega_C=\OO_C(K_X+C)$, then it follows from Riemann-Roch that
    $$H^1(C,\OO_C(C))\cong H^0(C,\OO_C(K_X))=H^0(C,\mathcal{O}_{C}((d-4)H)).$$
    Since $C$ is an ACM curve, it follows from the short exact sequence
    $$0\to\mathcal{I}_C(d-4)\to\OO_{\PP^3}(d-4)\to\OO_C((d-4)H)\to0$$ that $H^0(C,\mathcal{O}_{C}((d-4)H))\cong H^0(\PP^3,\OO_{\PP^3}(d-4))$ as $H^i(\PP^3,\mathcal{I}_C(d-4))=0$ for $i=0,1$. Therefore, $H^1(X,\mathcal{O}_{X}(C))=0$.
\end{proof}
\begin{rmk}\label{cor::formulaH0}
Let $C\subset X$ be as in the previous Lemma. Since $H^i(X,\mathcal{O}_{X}(C))=0$ for $i=1,2$, then Riemann-Roch yields
    $$h^0(X,\mathcal{O}_{X}(C))=\binom{d-1}{3}+g_C-(d-4)d_C.$$
\end{rmk}

\medskip\noindent
The previous computations allow us to calculate the dimension of $det(a,b)$ next, which answers Question \ref{qtn::question1}. 
\begin{proof}[Proof of Theorem \ref{thm::mainTheorem}]
    Let us denote by $Hilb_C^{\PP^3}$ (resp. $Hilb_C^X$) the Hilbert scheme of curves of degree $d_C$ and arithmetic genus $g_C$ inside $\PP^3$ (resp. $X$). We further denote by $\mathcal{H}_{a,b}\subset Hilb_C^{\PP^3}$ the locus of ACM curves with minimal free resolution (\ref{eq::ACMresolution}). Since we made the convention that $a_1>d$, we obtain a dominant rational map
    $$\mathcal{H}_{a,b}\dashrightarrow det(a,b)$$
    sending a curve in $C\in\mathcal{H}_{a,b}$ to the unique degree $d$ surface $X\in det(a,b)$ containing it. We denote by $\mathcal{H}_C^X$ the fiber of this map over a general $X$. Below, Lemma \ref{lem::technicalLemma} proves that $\mathcal{H}_C^X$ has dimension equal to $\dim|\OO_X(C)|$. Therefore, we get that $\dim det(a,b)=\dim\mathcal{H}_{a,b}-\dim\mathcal{H}_C^X$. Theorem \ref{thm::mainTheorem} now follows from this equation, Theorem \ref{thm::timFormula} and Remark \ref{cor::formulaH0}.
\end{proof}

\begin{lem}\label{lem::technicalLemma}
    If $X\in det(a,b)$ is smooth, every irreducible component of $\mathcal{H}_C^X$ is generically smooth of dimension $\dim|\OO_X(C)|$.
\end{lem}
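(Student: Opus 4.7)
The plan is to show that every irreducible component of $\mathcal{H}_C^X$ is an open subset of a complete linear system on $X$, from which smoothness and the dimension formula follow directly.

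First I would check that $\mathcal{H}_C^X=\mathcal{H}_{a,b}\cap Hilb_C^X$ is an open subscheme of the Hilbert scheme $Hilb_C^X$ of curves in $X$ with Hilbert polynomial $n\mapsto nd_C+1-g_C$: the condition of being ACM with fixed graded Betti numbers is open by Proposition \ref{prop::Migliore}, and $Hilb_C^X$ is a closed subscheme of the Hilbert scheme of $\PP^3$. Hence each irreducible component of $\mathcal{H}_C^X$ is open in a component of $Hilb_C^X$, and it suffices to describe the latter.

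Next I would identify the components of $Hilb_C^X$ with complete linear systems. Since $X\subset\PP^3$ is a smooth hypersurface, the ideal sheaf sequence of $X$ in $\PP^3$ forces $H^1(X,\OO_X)=0$, so $\operatorname{Pic}^0(X)=0$ and algebraic equivalence coincides with linear equivalence on $X$. Any connected component $V$ of $Hilb_C^X$ carries a flat family, so any two curves in $V$ are algebraically, hence linearly, equivalent; therefore $V\subset|\OO_X(D)|$ for any $D\in V$. Conversely, the linear system $|\OO_X(D)|$ is itself connected and sits inside $Hilb_C^X$, since linearly equivalent curves on $X$ share the same Hilbert polynomial in $\PP^3$. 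Thus $V=|\OO_X(D)|$, which is a smooth projective space.

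To compute the common dimension, I would apply Lemma \ref{lem::goodCohomologicalProperties} to every $C'\in\mathcal{H}_C^X$. Any such $C'$ shares the graded Betti numbers of $C$ and in particular the hypothesis $a_1>d$, so $d$ remains the minimal degree of a surface containing $C'$ and the lemma gives $H^1(X,\OO_X(C'))=H^2(X,\OO_X(C'))=0$. Riemann-Roch on $X$ (as in Remark \ref{cor::formulaH0}) then yields
\[
h^0(X,\OO_X(C'))=\binom{d-1}{3}+g_C-(d-4)d_C,
\]
a quantity independent of the component. Hence each component of $\mathcal{H}_C^X$ is an open subset of a projective space of dimension $\dim|\OO_X(C)|$, and is therefore generically smooth of that dimension.

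The main obstacle is the identification of connected components of $Hilb_C^X$ with single complete linear systems; the decisive input is $H^1(\OO_X)=0$, together with the fact that distinct linear systems on $X$ lie in distinct connected components of $Hilb_C^X$ precisely because $\operatorname{Pic}^0(X)=0$. A secondary subtlety is verifying that the vanishings of Lemma \ref{lem::goodCohomologicalProperties} propagate uniformly to every $C'\in\mathcal{H}_C^X$, not merely to the particular ACM curve produced in Proposition \ref{prop::keyObservation}; this is ensured by the constancy of the Betti numbers along $\mathcal{H}_{a,b}$.
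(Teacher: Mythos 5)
Your argument is correct in substance but takes a genuinely different route from the paper's. The paper works infinitesimally: for any $[C]\in Hilb_C^X$ with resolution (\ref{eq::ACMresolution}) it identifies the tangent space $T_{[C]}Hilb_C^X=H^0(N_{C/X})=H^0(\OO_C(C))$ and uses $0\to\OO_X\to\OO_X(C)\to\OO_C(C)\to0$ together with $H^1(X,\OO_X)=0$ to conclude that this tangent space has dimension $h^0(\OO_X(C))-1=\dim|\OO_X(C)|$; since $|\OO_X(C)|$ sits inside $Hilb_C^X$, the component through $[C]$ has dimension at least, hence exactly, that number, and is smooth there. You instead argue globally: $H^1(\OO_X)=0$ gives $\mathrm{Pic}^0(X)=0$, so algebraic equivalence along a connected family of divisors is linear equivalence and each component is a complete linear system, whose dimension you then compute by Riemann--Roch and Lemma \ref{lem::goodCohomologicalProperties}. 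The inputs are the same and your version has the virtue of showing directly that all components have the same dimension. The one step to tighten is the final inference ``open subset of a projective space, therefore generically smooth'': the equality $V=|\OO_X(D)|$ you establish is an equality of underlying sets, whereas generic smoothness of a Hilbert-scheme component concerns its scheme structure, i.e.\ the tangent space $H^0(N_{C'/X})$ at a general point, which could a priori exceed $\dim V$ if $Hilb_C^X$ were non-reduced along $V$. This is repaired by exactly the paper's one-line computation ($h^0(N_{C'/X})=h^0(\OO_{C'}(C'))=h^0(\OO_X(C'))-1$ from the same exact sequence and $H^1(\OO_X)=0$), or by invoking the scheme-theoretic statement that the divisor locus of the Hilbert scheme fibers over the reduced group scheme $\mathrm{Pic}(X)$ with fibers $\PP(H^0)$; with that sentence added your proof is complete.
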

\begin{proof}
    For any (not necessarily smooth) curve $C\subset X$ with minimal free resolution (\ref{eq::ACMresolution}), we first compute the dimension of the tangent space of $Hilb_C^X$ at $[C]$. Since $N_{C/X}=\mathcal{O}_C(C)$, consider the short exact sequence
    $$0\to\OO_X\to\OO_X(C)\to N_{C/X}\to 0.$$
The long exact sequence in cohomology, together with $H^1(X,\OO_X)=0$, proves that the restriction map
    $$H^0(X,\OO_X(C))\to H^0(N_{C/X})$$
    is surjective and has kernel of dimension $h^0(X,\OO_X)=1$. This proves that the tangent space to the Hilbert scheme $Hilb_C^X$, at $[C]$, has dimension $\dim |\OO_X(C)|$. 
    
On the other hand, the family $\mathcal{H}_{a,b}\subset Hilb_C^{\PP^3}$ of curves with minimal free resolution (\ref{eq::ACMresolution}) is a smooth open set of an irreducible component of $Hilb_C^{\PP^3}$ by Proposition \ref{prop::Migliore}. With respect to the natural inclusions $\mathcal{H}_C^X\subset Hilb_C^X\subset Hilb_C^{\PP^3}$, we have that $\mathcal{H}_C^X=\mathcal{H}_{a,b}\cap Hilb_C^X$ is open in $Hilb_C^X$. The result follows.
\end{proof}

\medskip
\begin{proof}[Proof of Corollary \ref{candites}]
Let $(a, b)$ be an admissible pair of degree $d$. Consider the incidence correspondence:
    \begin{equation*}
\xymatrix{    & \mathcal{I} \ar[dr]_{\pi} \ar[dl]&\! \! \! \! \! \! \! \! \! \!:=\{(C,X)\in \mathcal{H}_{a,b} \times|\OO_{\PP^3}(d)|:C\subset X\} \\
\mathcal{H}_{a,b} &  & \ \ \ \ \ det(a,b)\subset |\mathcal{O}_{\PP^3}(d)| } 
\end{equation*}
 By Proposition \ref{prop::keyObservation} and Proposition \ref{prop::Migliore}, the projection $\pi$ is dominant onto $det(a,b)$.
 Consider $X\in det(a,b)^{sm}$ and a curve $C\in \mathcal{H}_{a,b}$ contained in $X$ constructed by adding a row. Corollary \ref{cor::detInNL} implies that the line bundle $\mathcal{L}:=\OO_X(C)$ is not a multiple of $\mathcal{O}_X(1)$. That is, $X\in NL(d)$, and the first Chern class $c_1(\mathcal{L})$ of $\mathcal{L}$ is a primitive class in $H^{1,1}(X;\mathbb{Z}).$ 
 
 If $d=4$, then this Corollary follows from Lemma \ref{lem::simplerCurves}. If $d\ge 5$, we know that $det(a,b)^{sm}\subset \Sigma_{\mathcal{L}}$, where $\Sigma_{\mathcal{L}}$ stands for the component  of $NL(d)$ whose elements are surfaces that keep the integral class $c_1(\mathcal{L})$ as a class of type (1,1). Suppose that the containment $det(a,b)^{sm}\subset \Sigma_{\mathcal{L}}$ is strict. Thus, there must exist a 1st-order deformation $X'$ of $X$ and a lifting to it of the line bundle $\mathcal{L}$ which has no sections; otherwise $X'$ is in the image of $\pi$. However, $h^1(X,\mathcal{L})=0$ (Lemma \ref{lem::goodCohomologicalProperties}) implies that the section of $\mathcal{L}$ extends to any 1st-order deformation of the pair $(X,\mathcal{L})$
\cite[Corollary 3.3.15]{sernesi}. Thus any 
 $X'$ contains a curve in $\mathcal{H}_{a,b}$ which means it is in the image of $\pi$; this is a contradiction. By Corollary \ref{cor::detInNL} it is enough to consider 1st-order deformations of $\mathcal{L}$, and therefore, we must have $\Sigma_{\mathcal{L}}=det(a,b)^{sm}$ as claimed.
  \end{proof}

\medskip\noindent
\begin{remark} \label{quinticCurve}
A Noether-Lefschetz component $\Sigma$ may fail to be in the image of the morphism $\pi$ in an incidence correspondence similar to $\mathcal{I}$ above. For example, consider the family of surfaces of degree $5$ that contain a rational quintic curve. If $X$ is in this family and $\mathcal{L}=\mathcal{O}_X(C)$, where $C$ is a rational quintic curve, then the map $\pi:\mathcal{I}\to |\mathcal{O}_{\PP^3}(5)|$ is not dominant onto the family of surfaces with Picard group generated by $\mathcal{O}(1)$ and $\mathcal{L}$. In fact, in this case the image of $\pi$ has dimension $49$ which contradicts Theorem $\star$. Of course, $h^1(X,\mathcal{L})\ne 0$ which makes the proof of Corollary \ref{candites} fail.
\end{remark}

\medskip\noindent
Keeping the notation of Corollary \ref{candites}, let us consider the set of 1st-order deformations of $\mathcal{L}$ that lift to a 1st-order deformation $X'$ of $X$. This is the tangent space to $\Sigma_{\mathcal{L}}\subset NL(d)$ at $X$, which we denote $T_X\Sigma_{\mathcal{L}}$.

\medskip\noindent
Let $\Sigma_{\mathcal{L}}\subset NL(d)$ be a component formed by smooth determinantal surfaces, and $X$ a general point in it. Since $h^1({\mathcal{L}})=0$, then we can compare deformations in $T_X\Sigma_{\mathcal{L}}$ to those of the section of ${\mathcal{L}}$. This is what we do next.

\begin{prop}\label{prop::tangentDet}
    For an admissible pair $(a, b)$, consider a general surface $X\in det(a,b)^{sm}=\Sigma$ and a curve $C\subset X$ obtained by adding a row. Then 
    $$T_X\Sigma \cong\frac{T_C\mathcal{H}_{a,b}}{T_C|\mathcal{O}_X(C)|}.$$
    In particular, $\mathrm{dim}\  \! T_X\Sigma=\mathrm{dim} \ \!\Sigma$.
\end{prop}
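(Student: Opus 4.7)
The plan is to use the incidence diagram built in the proof of Corollary \ref{candites}, namely
$$\mathcal{H}_{a,b}\xleftarrow{p}\mathcal{I}\xrightarrow{\pi}det(a,b)^{sm}=\Sigma,$$
and to read off $T_X\Sigma$ as the image of the differential $d\pi_{(C,X)}$; this is precisely how $T_X\Sigma$ was defined in the paragraph preceding the proposition. So the proposition reduces to identifying $T_{(C,X)}\mathcal{I}$ with $T_C\mathcal{H}_{a,b}$ via $dp$ and then computing the kernel of $d\pi$.

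First I would show that $dp_{(C,X)}$ is an isomorphism by a direct cohomological computation. For injectivity, a tangent vector $(0,w)\in\ker dp$ corresponds to a first-order deformation of $X$ fixing $C$, i.e., $w\in T_X\mathbb{P}H^0(\mathcal{I}_C(d))$. Twisting the resolution $0\to B\to A\to\mathcal{I}_C\to 0$ by $d=a_0$, the admissibility conditions $a_i\geq a_1>a_0$ and $b_j>a_j\geq a_0$ for $i,j\geq 1$ force $h^0(A(d))=1$ and $h^0(B(d))=h^1(B(d))=0$, whence $h^0(\mathcal{I}_C(d))=1$ and this kernel vanishes. For surjectivity, given $v\in T_C\mathcal{H}_{a,b}=H^0(N_{C/\PP^3})$, extending to a pair $(v,w)\in T_{(C,X)}\mathcal{I}$ amounts to finding $G\in H^0(\OO_{\PP^3}(d))$ whose restriction to $C$ matches a prescribed element of $H^0(\OO_C(d))$; this is possible because $H^1(\mathcal{I}_C(d))=0$, which is part of the ACM property (Definition \ref{def1}).

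Next I would identify $\ker d\pi_{(C,X)}$. Under the isomorphism $dp$ just established, this kernel is the tangent space at $C$ to the fiber $\pi^{-1}(X)=\mathcal{H}_C^X$. The computation inside the proof of Lemma \ref{lem::technicalLemma} already shows $T_C\mathcal{H}_C^X\cong H^0(X,\OO_X(C))/H^0(X,\OO_X)=T_C|\OO_X(C)|$, so combining the two steps produces the short exact sequence
$$0\to T_C|\OO_X(C)|\to T_C\mathcal{H}_{a,b}\to T_X\Sigma\to 0,$$
which yields the claimed isomorphism.

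The dimension equality then drops out: smoothness of $\mathcal{H}_{a,b}$ (Proposition \ref{prop::Migliore}) gives $\dim T_C\mathcal{H}_{a,b}=\dim\mathcal{H}_{a,b}$, Lemma \ref{lem::technicalLemma} gives $\dim T_C|\OO_X(C)|=\dim\mathcal{H}_C^X$, so $\dim T_X\Sigma=\dim\mathcal{H}_{a,b}-\dim\mathcal{H}_C^X$, which coincides with $\dim det(a,b)=\dim\Sigma$ by the dimension count in the proof of Theorem \ref{thm::mainTheorem}. The main obstacle is the bijectivity of $dp$, which is the infinitesimal analogue of the uniqueness in Proposition \ref{prop::keyObservation}; once the two cohomological vanishings $h^0(\mathcal{I}_C(d))=1$ and $H^1(\mathcal{I}_C(d))=0$ are in hand, everything else is bookkeeping on dimensions already worked out in Section 2.
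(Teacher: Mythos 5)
Your computation of $\mathrm{im}(d\pi_{(C,X)})$ is correct and is in substance the same as the paper's: the paper works directly with the cohomology of the normal-bundle sequence $0\to N_{C/X}\to N_{C/\PP^3}\to N_{X/\PP^3}|_C\to 0$, which packages exactly the same linear algebra as your differentials $dp$ and $d\pi$ of the incidence correspondence. Your two vanishings $h^0(\mathcal{I}_C(d))=1$ and $h^1(\mathcal{I}_C(d))=0$ are precisely what the paper uses to identify $T_X\PP^N$ with $H^0(\mathcal{O}_C(d))=H^0(N_{X/\PP^3}|_C)$, and your identification of $\ker d\pi$ with $T_C|\mathcal{O}_X(C)|$ via $H^1(\mathcal{O}_X)=0$ and Lemma \ref{lem::technicalLemma} matches the paper's treatment of $\ker(\pi)$.

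The step you skip, however, is not cosmetic. You set $T_X\Sigma:=\mathrm{im}(d\pi)$ by reading the paragraph preceding the proposition as a definition. But $\Sigma$ is a component of $NL(d)$, and for the ``in particular'' clause to have content (it is invoked in the proof of Corollary 2 to conclude that $\Sigma$ is \emph{generically smooth}, and is compared there with $\ker(\cup c(\mathcal{L}))$), $T_X\Sigma$ must be the honest tangent space of $\Sigma$, i.e., the space of first-order deformations of $X$ along which the class of $\mathcal{L}=\mathcal{O}_X(C)$ remains algebraic. The inclusion $\mathrm{im}(d\pi)\subseteq T_X\Sigma$ is automatic; the substantive point is the reverse inclusion: a first-order deformation of the pair $(X,\mathcal{L})$ need not a priori carry along a first-order deformation of the \emph{effective curve} $C$. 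The paper closes exactly this gap using $H^1(X,\mathcal{L})=0$ from Lemma \ref{lem::goodCohomologicalProperties}: by \cite[Prop. 3.3.4]{sernesi} the sections of $\mathcal{L}$ extend to every first-order deformation of $(X,\mathcal{L})$, so the divisor $C$ deforms and the two spaces coincide. Remark \ref{quinticCurve} shows this hypothesis can genuinely fail (surfaces containing a rational quintic), so you should add this argument rather than absorb it into a definition; with it, your proof is complete and equivalent to the paper's.
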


\begin{proof} 
The normal-bundle sequence 
$0\to N_{C/X}\to N_{C/\PP^3}\to N_{X/\PP^3}|_C\to 0$
induces the following exact sequence in cohomology:
\begin{equation}\label{delta}
0\to H^0(\mathcal{O}_C(C))\to T_C\mathcal{H}_{a,b}\overset{\pi}{\longrightarrow} T_X\PP^N\overset{\delta}{\longrightarrow}H^1(\mathcal{O}_C(C))
\end{equation}
where $\PP^N=|\mathcal{O}_{\PP^3}(d)|$. The tangent space satisfies $T_X\PP^N\cong H^0( N_{X/\PP^3}|_C)$ because $C$ is ACM and by construction contained in a unique surface $X$ of degree $d$. Now, the image of $\pi$ consists of 1st-order deformations of $X$, as subscheme of $\PP^3$, that contain a 1st-order deformation of $C$. In other words, $T_C\mathcal{H}_{a,b}/ker(\pi)\subset T_X\Sigma$. This inclusion is an equality because the sections of $\mathcal{L}=\mathcal{O}_X(C)$ extend to any of its deformations as a consequence of $H^1(\mathcal{L})=0$ \cite[Prop. 3.3.4]{sernesi}. It follows from the regularity of $X$, Lemma \ref{lem::goodCohomologicalProperties} and the sequence $0\to \mathcal{O}_X\to \mathcal{O}_X(C)\to \mathcal{O}_C(C)\to 0$, that $ker(\pi)\cong T_C|\mathcal{O}_X(C)|$. Since $\mathcal{H}_{a,b}$ is generically smooth then we have that $\mathrm{dim}\  \! T_X\Sigma=\mathrm{dim} \ \!\Sigma$ by Corollary \ref{candites} and Theorem \ref{thm::mainTheorem}. 
\end{proof}

\begin{remark} Even if a curve $L$ is contained in a unique degree $d$ smooth surface $X$, the space $H^0(N_{X/\PP^3}|_C)$ can fail to be in bijection with $T_X\PP^N$; unlike the case in (\ref{delta}). This would make Proposition \ref{prop::tangentDet} also fail.
As an example, let $L$ be a smooth curve residual to a smooth rational quintic curve (considered in Remark \ref{quinticCurve}), in a complete intersection of two smooth surfaces of degree 5 and 6. Here, $L$ is contained in a unique degree 5 surface $X$, but Proposition \ref{prop::tangentDet} fails as $L$ is not ACM and $h^0(N_{X/\PP^3}|_L)=56> \mathrm{dim}\ \! T_X\PP^{55}.$
\end{remark}

\medskip
\begin{proof}[Proof of Corollary 2] The component $det(a,b)^{sm}=\Sigma$ is generically smooth because $\mathrm{dim}\  \! T_X\Sigma=\mathrm{dim} \ \!\Sigma$ for a general $X$. Let us now write the codimension of $\Sigma$ in terms of the curve $C$ constructed in Prop. \ref{prop::keyObservation} by adding a row.
    Since $X$ is regular and $H^1(\mathcal{O}_X(C))=0$, then
    $H^1(\mathcal{O}_C(C))\cong H^2(\mathcal{O}_X)$. If $d=4$, then the result follows from Lemma \ref{lem::simplerCurves}. 
    If $d>4$, then the normal-bundle sequence  yields:
    \begin{equation*}
\xymatrix{  T_X\PP^N \ar[r]^{\delta} \ar@{->>}[d]^{f} & H^1(\mathcal{O}_C(C)) \ar[d]^{\cong} \ar[r]& H^1(N_{C/\PP^3})\ar[r]^g \ar@{=}[d] & H^1(\mathcal{O}_C(d))\ar@{=}[d]\to 0\\
H^1(T_X)\ar[r]^{\cup c(\mathcal{L})} & H^2(\mathcal{O}_X)\ar[r] &H^1(N_{C/\PP^3})\ar[r]^g &H^1(\mathcal{O}_C(d))\to 0}
\end{equation*}
The dimension of the image of the map $\cup c(\mathcal{L})$ is the codimension of $\Sigma$. Indeed, the kernel of $\cup c(\mathcal{L})$ consists of the first-order deformations of $X$, as an abstract variety, along the line bundle $\mathcal{L}$ by \cite[Th. 3.3.11]{sernesi}. The map $f$ sends a first-order deformation of $X$ as a subscheme in $\PP^3$ to a first-order deformation of $X$ as an abstract variety. If $d>4$, then $f$ is onto \cite[Th. 18.5]{Kodaira}. The exact sequence (\ref{delta}) implies that the result follows from the commutativity of this diagram and the surjectivity of the map $g$.
\end{proof}

\begin{table}[t]
\begin{center}
    \begin{tabular}{|c|c|c|}
        \hline $d$ & $\textrm{codim } det(a,b)$ & \# components \\
        \hline \hline
        3 & 0 & 1 \\\hline
        4 & 5:1 & 5 \\ \hline
        5 & 2, 3, 6:4 & 8 \\ \hline
        6 & 3, 5, 6, 7, 8, 2:9, 9:10 & 16 \\ \hline
        7 & 4, 7, 9, 10, 12, 14, 15, 2:16, 17, 2:18, 2:19, 11:20& 25 \\ \hline
        8 & 5, 9, 12, 2:13, 16, 19, 21, 22, 2:23, 24, 26, & 44\\
        & 27, 28, 2:29, 2:30, 3:31, 2:32, 3:33, 3:34, 14:35 & \\
        \hline
        9 & 6, 11, 15, 16, 17, 20, 24, 27, 28, 30, 2:31, 32, 34, 36, 37, 39, 40, 2:41, 42, & 68\\
        & 2:43, 44, 45, 2:46, 47,  2:48, 2:49, 4:50, 2:51, 4:52, 2:53, 4:54, 3:55, 17:56 & \\
        \hline
        \end{tabular} 
    \caption{Codimension of components of families of determinantal surfaces of a degree $d$. The symbol $k:r$ denotes that $r$ occurs as codimension $k$ times. }\label{T2}
\end{center}
\end{table}

\medskip
\section{Distinguished Noether-Lefschetz components}
\noindent 
We start this section by studying linear determinantal surfaces.
Later, we examine the codimension of the components $det(a,b)^{sm}\subset NL(d)$ in terms of the degree $d$ and length $t$ of the admissible pair $(a,b)$. This contrasts with Corollary 2 which relies on the geometry of curves contained in determinantal surfaces. In particular, Corollary \ref{cor::specialComponents} conjectures that $det(a,b)^{sm}$ is a special component of $NL(d)$ if $t\leq\lfloor\frac{d-1}{3}\rfloor$.

\subsection{Linear determinantal surfaces}
As an application of Theorem \ref{thm::mainTheorem}, we compute the dimension of the locus of linear determinantal surfaces of degree $d$, recovering \cite[Theorem 1.3]{reichstein2017dimension} in the case $r=3$ with a different method. This computation confirms that linear determinantal surfaces form a component of the Noether-Lefschetz locus $NL(d)$ of maximal codimension $\binom{d-1}{3}$, which had been established in \cite{CiroAngelo}. Notice that Theorem \ref{thm::mainTheorem} tells us that there are many more components $det(ab)$ of maximal codimension.
\begin{cor}[of Theorem \ref{thm::mainTheorem}]\label{cor::linearEntries}
    The linear determinantal surfaces of degree $d$ form a family of dimension $2d^2+1.$
\end{cor}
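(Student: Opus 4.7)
The plan is to apply Theorem \ref{thm::mainTheorem} to the unique admissible pair encoding linear matrices and then reduce the resulting expression to $2d^2+1$ by direct computation.

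First I would identify the admissible pair $(a,b)$ corresponding to linear determinantal surfaces. If every entry $m_{ij}$ of the matrix $S$ in (\ref{eq::determinantalSurface}) is linear, then $b_j-a_i=1$ for all $i,j\in\{1,\dots,t\}$. This forces $a_1=\cdots=a_t$ and $b_1=\cdots=b_t$ with $b_j=a_i+1$, and the degree constraint $d=\sum_{i=1}^t(b_i-a_i)$ yields $t=d$. Using the shift in Definition \ref{def::admissiblePair}, I would pick the representative $a_i=d+1$ for $i=1,\dots,d$ and $b_j=d+2$ for $j=1,\dots,d$, which satisfies the hypothesis $a_1>d$ of Theorem \ref{thm::mainTheorem}. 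Setting $a_0:=d$, the bundles appearing in the theorem are
\[
A=\OO_{\PP^3}(-d)\oplus\OO_{\PP^3}(-d-1)^{\oplus d},\qquad B=\OO_{\PP^3}(-d-2)^{\oplus d}.
\]

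Next I would compute the three $hom$ numbers. Using $h^0(\PP^3,\OO(k))=\binom{k+3}{3}$, so that $h^0(\OO(0))=1$, $h^0(\OO(1))=4$, and $h^0(\OO(2))=10$, one separates the contribution of the summand $\OO(-d)$ of $A$ from the remaining $\OO(-d-1)^{\oplus d}$ and obtains
\[
hom(A,A)=1+4d+d^{2},\qquad hom(B,B)=d^{2},\qquad hom(B,A)=10d+4d^{2}.
\]
From the graded Betti numbers and (\ref{eq::degreeAndGenus}) I would then compute
\[
d_C=\tfrac{d(d+3)}{2},\qquad g_C=1+\tfrac{2d^{3}+3d^{2}-11d}{6}.
\]

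Finally I would substitute all of this into (\ref{eq::mainTheorem}). The linear and quadratic pieces combine to $2+(10d+4d^{2})-(1+4d+d^{2})-d^{2}=1+6d+2d^{2}$, while the remaining cubic contribution
\[
-\binom{d-1}{3}-g_C+(d-4)d_C
\]
simplifies term-by-term to $-6d$, so that the cubic and quadratic contributions of $g_C$, $\binom{d-1}{3}$, and $(d-4)d_C$ all cancel, leaving $2d^{2}+1$. The only possible pitfall is the routine algebra in the last step; there is no conceptual obstacle, since Theorem \ref{thm::mainTheorem} does all the geometric work. As a sanity check one may also verify the bound $\dim det(a,b)=2d^2+1\le \dim|\OO_{\PP^3}(d)|=\binom{d+3}{3}-1$ with equality of codimensions $\binom{d-1}{3}$ realizing the upper bound of Theorem $\star$, confirming that linear determinantal surfaces form a component of $NL(d)$ of maximal codimension, in agreement with \cite{reichstein2017dimension} for $r=3$.
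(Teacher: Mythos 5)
Your proposal is correct and follows essentially the same route as the paper: the same admissible pair $a_i=d+1$, $b_j=d+2$ with $t=d$ and $a_0=d$, the same values $hom(A,A)=d^2+4d+1$, $hom(B,B)=d^2$, $hom(B,A)=4d^2+10d$, and the same $d_C$ and $g_C$ (your expression for $g_C$ agrees with the paper's factored form $\frac{(d-1)(2d^2+5d-6)}{6}$), substituted into (\ref{eq::mainTheorem}). The final algebra, including the cancellation of the cubic terms to $-6d$, checks out.
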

\begin{proof}
    Let $t:=d$. The family of linear determinantal surfaces coincides with $det(a,b)$ for the admissible pair $(a,b)$ given by $a_i:=d+1$, $b_j:=d+2$ for all $i,j$. 
    We apply Theorem \ref{thm::mainTheorem} where $a_0=d$ and
    $$A=\OO_{\PP^3}(-(d+1))^d\oplus\OO_{\PP^3}(-d),\quad B=\OO_{\PP^3}(-(d+2))^d.$$
    Following the expression (\ref{eq::mainTheorem}), we verify that $hom(B,B)=d^2$, $hom(A,A)=d^2+4d+1$, and
    \begin{align*}
        hom(B,A)&=d^2\binom{(d+2)-(d+1)+3}{3}+d\binom{(d+2)-d+3}{3}\\
        &=4d^2+10d.
    \end{align*}
    Similarly, using (\ref{eq::degreeAndGenus}) we obtain the expressions $$d_C=\frac{d(d+3)}{2},\quad g_C=\frac{(d-1)(2d^2+5d-6)}{6}.$$
    Putting everything together, we get that $\dim det(a,b)=2d^2+1$.
\end{proof}

\noindent
Let us mention other distinguished components of $NL(d)$ which are realized by determinantal surfaces.
\begin{remark}\label{rmk::lineConic}
    Let $\Sigma_1, \Sigma_2\subset NL(d)$ be the families of surfaces containing a line or a conic, respectively. Both are of the form $det(a,b)^{sm}$, the former for $a=(0,0),b=(1,d-1)$ while the latter for $a=(0,1), b=(2,d-1)$. Consequently, Theorem \ref{thm::mainTheorem} implies the well-known fact $\mathrm{codim}\ \Sigma_1=d-3$, and $\mathrm{codim}\ \Sigma_2 = 2d-7$. 
\end{remark}

\medskip\noindent
M. Green and C. Voisin showed that any component of $NL(d)$, distinct to  $\Sigma_1,\Sigma_2$ above, has strictly larger codimension than $2d-7$ if $d\geq5$. That is, $\Sigma_1,\Sigma_2$ are unique in their codimension \cite{Green2,Voisin2}. On the other hand, Theorem \ref{thm::mainTheorem} tells us there are many distinct special components of the same codimension. Avoiding these cases, it would be interesting to know whether determinantal surfaces form special components of $NL(d)$ which are unique in their codimension.

\begin{remark}\label{rmkCA} C. Ciliberto and A.F. Lopez \cite{CiroAngelo} proved that for any $d\ge 8$ there is a component in $NL(d)$ of codimension $c$ for every integer $c$ such that $$\mathrm{min}\{\tfrac{3}{4}d^2-\tfrac{17}{4}d+\tfrac{19}{3},\tfrac{9}{2}d^{\tfrac{3}{2}}\}\le \ \! c \ \! \le \binom{d-1}{3}.$$
This result leaves the question open whether for $c< \tfrac{9}{2}d^{\tfrac{3}{2}}$ there is a component in $NL(d)$ of codimension $c$ in case $d>46$. On the other hand, for any $d>46$, determinantal surfaces provide many examples of components $det(a,b)^{sm}$ whose codimension is less than $\tfrac{9}{2}d^{\tfrac{3}{2}}$. For example, for $d=47$, in the interval $44\leq c\leq \frac{9}{2}d^{\frac{3}{2}}=1449$, there are 235 different values of $c$ that realize the codimension of a component formed by determinantal surfaces induced by matrices of size $2\leq t\leq 4$. In \cite{MV}, we release Macaulay2 code for the reader to explore such components using a computer.
\end{remark}

\subsection{Combinatorial constrains on the codimension of $det(a,b)$.} Fix $d$ and $t$. We define two admissible pairs $(a_{d,t}^{min},b_{d,t}^{min})$ and $(a_{d,t}^{max},b_{d,t}^{max})$ of degree $d$ and length $t$ and compute the dimension of the families they determine. We expect these dimensions to be the lower and upper bounds for $\dim det(a,b)$, when $(a,b)$ is an admissible pair of degree $d$ and length $t$.

\begin{cor}[of Theorem \ref{thm::mainTheorem}]\label{cor::minMax}
    Given $2\leq t\leq d$, let $(a_{d,t}^{max}, b_{d,t}^{max})$ be the pair given by
    \begin{align*}
        \left(a_{d,t}^{max}\right)_i&:=\left\{
            \begin{array}{cc}
                0 & i=1\\
                d-t & i\geq 2
            \end{array}
        \right.\\
        \left(b_{d,t}^{max}\right)_j&:=d-t+1.
    \end{align*}
    Then
    \begin{align*}
        \dim |\OO_{\PP^3}(d)| - \dim det(a_{d,t}^{max},b_{d,t}^{max})=\left\{
        \begin{array}{cc}
            \frac{t(t-1)(3d-2t-5)}{6}  & t<d\\
            \binom{d-1}{3} & t=d.
        \end{array}
        \right.
    \end{align*}
   Similarly, set $d=tk+r$, with $0\leq r<t$ and define the pair $(a_{d,t}^{min},b_{d,t}^{min})$ by
    \begin{align*}
        \left(a_{d,t}^{min}\right)_i&:=0\\
        \left(b_{d,t}^{min}\right)_j&:=\left\{
            \begin{array}{cc}
                k & j\leq t-r\\
                k+1 & j>t-r.
            \end{array}
        \right.
    \end{align*}
    Then
    $$\textrm{dim}\ det(a_{d,t}^{min},b_{d,t}^{min})=\binom{k-1}{3}t^2+\binom{k-1}{2}rt+2d^2+1.$$
\end{cor}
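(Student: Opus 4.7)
The proof is a direct computation with Theorem \ref{thm::mainTheorem} applied to each of the two explicit admissible pairs. Since $det(a,b)$ depends only on the equivalence class of $(a,b)$, I would first translate each pair by $d+1$ (keeping the ambient constant $a_0 = d$ fixed) so that the hypothesis $a_1 > d$ of Theorem \ref{thm::mainTheorem} is met.

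For the max pair, after this shift one has $a_1 = d+1$, $a_i = 2d-t+1$ for $i\geq 2$, and $b_j = 2d-t+2$ for all $j$. Since $A$ then has only three distinct twist components and $B$ only one, the hom-counts reduce to short sums: $hom(B,B) = t^2$; $hom(A,A)$ is assembled from the non-negative differences $a_i - a_k \in \{0, 1, d-t, d-t+1\}$; and $hom(B,A)$ from $b_j - a_i \in \{1, d-t+1, d-t+2\}$. Using formulas (\ref{eq::degreeAndGenus}) one finds (in particular) $d_C = d(d+1) - \binom{t}{2}$, and substitution into (\ref{eq::mainTheorem}) produces a polynomial in $d,t$ that simplifies to codimension $\frac{t(t-1)(3d-2t-5)}{6}$ when $t<d$. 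When $t=d$, the max pair coincides up to equivalence with the linear determinantal pair, so Corollary \ref{cor::linearEntries} combined with the identity $\binom{d+3}{3}-\binom{d-1}{3} = 2d^2+2$ yields codimension $\binom{d-1}{3}$.

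For the min pair, the analogous shift yields $a_1 = \cdots = a_t = d+1$ and $b_j = d+1+k$ for $j \leq t-r$, $b_j = d+2+k$ for $j > t-r$. Now
$$A = \mathcal{O}(-d) \oplus \mathcal{O}(-(d+1))^t, \qquad B = \mathcal{O}(-(d+1+k))^{t-r} \oplus \mathcal{O}(-(d+2+k))^r,$$
so each of $hom(A,A), hom(B,B), hom(B,A)$ is again an explicit sum of at most four binomials of the form $\binom{k+3}{3}, \binom{k+4}{3}, \binom{k+5}{3}$ weighted by polynomials in $t, r$. The degree and genus of $C$ come directly from (\ref{eq::degreeAndGenus}) once $d = tk+r$ is substituted, and after regrouping the binomials the terms $\binom{k-1}{3}t^2$ and $\binom{k-1}{2}rt$ in the claimed dimension emerge.

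The only real obstacle is algebraic bookkeeping: each hom-count and each cube-sum in (\ref{eq::degreeAndGenus}) is a low-degree polynomial in the parameters, and the cancellations that collapse them to the stated closed forms are mechanical but lengthy. To guard against arithmetic errors I would verify the final identities on the small-$d$ entries in Table \ref{T2} (for instance the codimension $2$ component at $d=5$ realized by the max pair with $t=2$), cross-checking against the Macaulay2 implementation of (\ref{eq::mainTheorem}) provided at \cite{MV}.
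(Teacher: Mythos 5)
Your proposal is correct and follows exactly the paper's route: the paper's own proof simply states the result is a direct consequence of Theorem \ref{thm::mainTheorem}, omits the computations, and records only the same normalization you use (shifting so that $A=\OO_{\PP^3}(-(2d-t+1))^{t-1}\oplus\OO_{\PP^3}(-d-1)\oplus\OO_{\PP^3}(-d)$, $B=\OO_{\PP^3}(-(2d-t+2))^t$ for the max pair) together with the observation that $2d-t+1\geq d+1$ with equality iff $t=d$, which forces the case split. Your intermediate checks (e.g.\ $d_C=d(d+1)-\binom{t}{2}$ for the max pair, and handling $t=d$ via Corollary \ref{cor::linearEntries} and the identity $\binom{d+3}{3}-\binom{d-1}{3}=2d^2+2$) are accurate and supply more detail than the paper itself.
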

\begin{proof}
    This is a direct consequence of Theorem \ref{thm::mainTheorem} and we omit the computations as no difficulty arises. However, let us note that to apply Theorem 1 to $(a_{d,t}^{max},b_{d,t}^{max})$, we need to consider the free sheaves
    \begin{align*}
        A&:=\OO_{\PP^3}(-(2d-t+1))^{t-1}\oplus\OO_{\PP^3}(-d-1)\oplus\OO_{\PP^3}(-d),\\
        B&:=\OO_{\PP^3}(-(2d-t+2))^t.
    \end{align*}
    We have that $2d-t+1\geq d+1$ with equality if and only if $t=d$. This explains the necessity of separating the result in two cases: the term $hom(A,A)$ increases precisely when $t=d$.
\end{proof}

\medskip\noindent 
    We have verified the following conjecture for $d\leq 28$. See \cite{MV} for the Macaulay2 code  used to do so.
    \begin{conj}\label{conj::minMaxConj}
        If $(a,b)$ is an admissible pair of degree $d$ and length $t$, then
        $$\mathrm{dim }\ det(a^{min}_{d,t},b^{min}_{d,t})\leq \mathrm{dim }\ det(a,b)\leq\mathrm{dim }\ det(a^{max}_{d,t},b^{max}_{d,t}).$$
    \end{conj}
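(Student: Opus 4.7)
The plan is to reduce the conjecture to a combinatorial inequality about symmetric functions of the $a_i$ and $b_j$ derived from (\ref{eq::mainTheorem}), and to prove this inequality using convexity. Using the equivalence of Definition \ref{def::admissiblePair} to normalize, one checks that the four principal terms in (\ref{eq::mainTheorem}) are each sums of values of the convex cubic $P(n) := \binom{n+3}{3}$ at integer differences $b_j - a_i$, $a_{i'} - a_i$, $b_{j'} - b_j$, while by (\ref{eq::degreeAndGenus}) the combination $-g_C + (d-4)d_C$ is a symmetric polynomial of degree three in the $a_i$ and $b_j$. Thus $\dim det(a,b)$ is a fixed $\mathbb{Z}$-linear combination of such cubic sums, and the conjecture becomes an inequality among cubic symmetric functions on the finite lattice of admissible pairs of degree $d$ and length $t$.

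The next step is to establish monotonicity under a complete set of elementary moves on this lattice. Three moves suffice: a $b$-\emph{balancing} $(b_j, b_{j+1}) \mapsto (b_j+1, b_{j+1}-1)$, an $a$-\emph{spreading} $(a_i, a_{i+1}) \mapsto (a_i-1, a_{i+1}+1)$, and a \emph{transfer} $(a_i, b_j) \mapsto (a_i+1, b_j+1)$, each performed subject to admissibility. A combinatorial argument in the spirit of the classical fact that the dominance order on partitions is generated by adjacent transpositions, together with the transfer moves that adjust $\sum a_i$ and $\sum b_j$ within the degree constraint $\sum b_j - \sum a_i = d$, shows these moves and their inverses connect every admissible pair both to $(a^{max}_{d,t}, b^{max}_{d,t})$ and to $(a^{min}_{d,t}, b^{min}_{d,t})$. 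Since the second difference of $P$ satisfies $P(n+1) + P(n-1) - 2P(n) = n+2 \ge 1$ for $n \ge -1$, each move changes the four principal terms by amounts controlled by this convexity. The plan is to collect these contributions with the signs prescribed by (\ref{eq::mainTheorem}) and to show that the net change in $\dim det(a,b)$ is strictly negative for $b$-balancing and strictly positive for $a$-spreading and for appropriate transfer moves, yielding both bounds of the conjecture.

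The main obstacle is the delicate cancellation among the positive $hom(B,A)$ contribution and the negative $hom(A,A) + hom(B,B)$ contributions, further complicated by the cubic term in $g_C$ and the linear-in-$d$ weighting of $d_C$. A single elementary move induces small changes of opposite sign in several terms, so it is not a priori clear that the signed sum of the available convexity inequalities is monotone; in addition, the admissibility constraint $a_i < b_i$ can block individual moves, forcing coordinated sequences whose combined effect must also be tracked. A promising workaround is to reformulate the dimension through the Hilbert function $H_{R/I_C}$ of the ACM curve $C$ via $h^0(\mathcal{I}_C(n)) = \binom{n+3}{3} - H_{R/I_C}(n)$ and the Hilbert--Burch resolution, reducing the conjecture to an extremality statement about Hilbert functions of ACM curves with prescribed numerical character, where Macaulay--Gotzmann-type inequalities are natural tools. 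The fact that the conjecture has only been verified computationally up to $d = 28$ suggests that a uniform proof will likely require either this reformulation or a genuinely new combinatorial identity able to absorb the sign cancellations.
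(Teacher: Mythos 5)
This statement is an open conjecture in the paper, not a theorem: the authors state explicitly that they have only verified it computationally for $d\leq 28$, and they offer no proof. Your proposal likewise does not constitute a proof; it is a strategy outline whose decisive step is left open, as you yourself acknowledge when you write that ``it is not a priori clear that the signed sum of the available convexity inequalities is monotone.'' The entire content of the conjecture lies precisely in that unresolved cancellation, so nothing in the proposal closes the gap.

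Two concrete difficulties deserve emphasis. First, the reduction of $\dim det(a,b)$ to a linear combination of values of the convex cubic $P(n)=\binom{n+3}{3}$ is not quite right as stated: the terms $hom(A,A)$ and $hom(B,B)$ are sums of $h^0(\OO_{\PP^3}(a_i-a_{i'}))$ and $h^0(\OO_{\PP^3}(b_j-b_{j'}))$, which vanish (rather than continuing the cubic) when the degree difference is negative. The dimension is therefore only a \emph{piecewise} polynomial function on the lattice of admissible pairs, and an elementary move can cross a wall where one of these terms switches on or off; this is exactly the phenomenon behind the case split $t<d$ versus $t=d$ in Corollary \ref{cor::minMax}, where the authors note that $hom(A,A)$ jumps when $2d-t+1=d+1$. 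Any convexity argument must handle these wall-crossings, and your proposal does not. Second, the claim that the three elementary moves connect every admissible pair to both extremes \emph{while respecting admissibility at every intermediate step, and with the dimension changing monotonically in the correct direction along the whole path}, is asserted by analogy with the dominance order on partitions but not verified; the constraint $a_i<b_i$ can indeed block moves, and you would need to prove that the coordinated sequences you invoke still have net change of the required sign. Until the sign of the net change under each move (including across walls) is actually computed and shown to be uniform, the proposal remains a plausible plan rather than a proof.
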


    \medskip\noindent 
    Conjecture \ref{conj::minMaxConj} relates to Theorem $\star$ as follows. The admissible pair $(a^{min}_{d,d},b^{min}_{d,d})$ determines the family of linear determinantal surfaces, and the pair $(a^{max}_{d,2}, b^{max}_{d,2})$ the family of surfaces containing a line. Furthermore, for any $2\leq t\leq d$ we have that
    $$\mathrm{dim}\ det(a^{min}_{d,d},b^{min}_{d,d})\leq\mathrm{dim}\ det(a^{min}_{d,t},b^{min}_{d,t})\leq\mathrm{dim}\ det(a^{max}_{d,t}, b^{max}_{d,t})\leq\mathrm{dim}\ det(a^{max}_{d,2}, b^{max}_{d,2}).$$
    Therefore, for irreducible components of the Noether-Lefschetz locus arising from determinantal surfaces, Conjecture \ref{conj::minMaxConj} predicts finer bounds than those of Theorem $\star$.  Indeed, for $d=t$ there is only one admissible pair corresponding to linear determinantal surfaces, which form a general component of $NL(d)$. For $t=d-1$ and $t=d-2$, the two bounds above coincide, and this allows us to conclude the following.
    \begin{cor}[of Conjecture \ref{conj::minMaxConj}]
        If $(a,b)$ is an admissible pair of degree $d$ and length $t=d-1$ or $t=d-2$ then $det(a,b)^{sm}$ is a general component of $NL(d)$.
    \end{cor}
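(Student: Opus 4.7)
The plan is to combine the explicit formulas in Corollary \ref{cor::minMax} with Conjecture \ref{conj::minMaxConj} and the upper bound of Theorem $\star$. The idea is that for $t\in\{d-1,d-2\}$ the upper-extremal family $det(a^{max}_{d,t},b^{max}_{d,t})$ already has codimension $\binom{d-1}{3}$; together with the conjectural sandwich and Theorem $\star$'s unconditional upper bound on codimension, this forces $\mathrm{codim}\ det(a,b)^{sm}=\binom{d-1}{3}$, which is precisely the condition for being a general component.

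First I would evaluate the upper bound from Corollary \ref{cor::minMax}. Substituting $t=d-1$ into $\tfrac{t(t-1)(3d-2t-5)}{6}$ yields $\tfrac{(d-1)(d-2)(d-3)}{6}=\binom{d-1}{3}$, and $t=d-2$ yields $\tfrac{(d-2)(d-3)(d-1)}{6}=\binom{d-1}{3}$. To verify the remark in the text that the two bounds coincide, I would also inspect the lower bound: for $t=d-1$ we have $(k,r)=(1,1)$; for $t=d-2$ with $d\geq 5$ we have $(k,r)=(1,2)$; and for $d=4$, $t=2$ we have $(k,r)=(2,0)$. In every case $k\leq 2$, so $\binom{k-1}{3}=\binom{k-1}{2}=0$ and Corollary \ref{cor::minMax} reduces to $\dim det(a^{min}_{d,t},b^{min}_{d,t})=2d^2+1$. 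The polynomial identity $\binom{d+3}{3}-(2d^2+2)=\binom{d-1}{3}$ (routine expansion) then confirms both extremes give the same codimension $\binom{d-1}{3}$.

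Next I would invoke Conjecture \ref{conj::minMaxConj}: for any admissible pair $(a,b)$ of degree $d$ and length $t\in\{d-1,d-2\}$, the inequality $\dim det(a,b)\leq \dim det(a^{max}_{d,t},b^{max}_{d,t})$ translates into $\mathrm{codim}\ det(a,b)\geq \binom{d-1}{3}$. By Corollary \ref{candites}, $\Sigma:=det(a,b)^{sm}$ is a component of $NL(d)$, so Theorem $\star$ gives $\mathrm{codim}\ \Sigma\leq \binom{d-1}{3}$. Hence equality holds, and by the definition of \emph{general} given just after Theorem $\star$ (equivalently, by Corollary 2 with $\kappa=0$), $\Sigma$ is a general component of $NL(d)$.

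The main obstacle is merely bookkeeping: verifying the polynomial identity $\binom{d+3}{3}-2d^2-2=\binom{d-1}{3}$ and checking the three small-$t$, small-$d$ cases for the lower bound. Conceptually, nothing beyond Conjecture \ref{conj::minMaxConj} and the results already proved is needed, since the whole argument reduces to pinning down $\mathrm{codim}\ \Sigma$ between two matching estimates.
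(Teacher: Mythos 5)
Your proposal is correct and matches the paper's (very terse) argument: the paper simply observes that for $t=d-1$ and $t=d-2$ the two bounds of Conjecture \ref{conj::minMaxConj} coincide, which is exactly the computation you carry out (both extremes give codimension $\binom{d-1}{3}$, hence so does $det(a,b)$, which is the definition of a general component). Your additional use of Theorem $\star$ for the upper bound on the codimension is a harmless variant of using the minimal pair directly, and your case analysis of $(k,r)$ and the identity $\binom{d+3}{3}-2d^2-2=\binom{d-1}{3}$ check out.
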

    \medskip\noindent On the other hand, Conjecture \ref{conj::minMaxConj} also predicts many special components of $NL(d)$ for low values of $t$.

    \begin{cor}[of Conjecture \ref{conj::minMaxConj}]\label{cor::specialComponents}
        If $(a,b)$ is an admissible pair of degree $d$ and length $t$ with
        $$t\leq\left\lfloor\frac{d-1}{3}\right\rfloor$$
        then $det(a,b)^{sm}$ is a special component of $NL(d)$.
    \end{cor}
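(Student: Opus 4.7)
My plan is to combine Conjecture~\ref{conj::minMaxConj} with the dimension formula in Corollary~\ref{cor::minMax} for the minimal admissible pair $(a_{d,t}^{min},b_{d,t}^{min})$. Granting the conjecture, $\dim det(a,b)\geq \dim det(a_{d,t}^{min},b_{d,t}^{min})$ for any admissible pair of degree $d$ and length $t$, and this passes to the reverse inequality on codimensions. Thus it is enough to show
$$\mathrm{codim}\ det(a_{d,t}^{min},b_{d,t}^{min})\;<\;\binom{d-1}{3}.$$

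Writing $d=tk+r$ with $0\leq r<t$, Corollary~\ref{cor::minMax} gives $\dim det(a_{d,t}^{min},b_{d,t}^{min})=\binom{k-1}{3}t^2+\binom{k-1}{2}rt+2d^2+1$. Combined with $\dim|\OO_{\PP^3}(d)|=\binom{d+3}{3}-1$ and the polynomial identity $\binom{d+3}{3}-\binom{d-1}{3}=2d^2+2$ (immediate by expanding both sides), one obtains the clean rearrangement
$$\mathrm{codim}\ det(a_{d,t}^{min},b_{d,t}^{min})-\binom{d-1}{3}=-\binom{k-1}{3}t^2-\binom{k-1}{2}rt.$$

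It then remains to verify that the right-hand side is strictly negative under the hypothesis $t\leq\lfloor(d-1)/3\rfloor$, which is equivalent to $d\geq 3t+1$. A short two-case split does it: if $d\geq 4t$ then $k\geq 4$, so $\binom{k-1}{3}t^2>0$; otherwise $3t+1\leq d\leq 4t-1$, which forces $k=3$ and $r=d-3t\geq 1$, so $\binom{k-1}{2}rt=rt>0$. In either case the codimension is strictly less than $\binom{d-1}{3}$, which by the paragraph after Theorem~$\star$ is precisely the condition for $det(a,b)^{sm}$ to be a special component of $NL(d)$.

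The only nontrivial ingredient here is Conjecture~\ref{conj::minMaxConj}; once granted, the remainder is routine polynomial algebra and an elementary case analysis. Note also that the bound $t\leq\lfloor(d-1)/3\rfloor$ is sharp in this argument: at $d=3t$ one gets $k=3$, $r=0$ and the right-hand side of the displayed identity vanishes, so $(a_{d,t}^{min},b_{d,t}^{min})$ achieves codimension exactly $\binom{d-1}{3}$ and yields a general, rather than special, component of $NL(d)$.
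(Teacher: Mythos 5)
Your proposal is correct and follows essentially the same route as the paper: invoke Conjecture~\ref{conj::minMaxConj} to reduce to the minimal pair, observe that the excess $\binom{k-1}{3}t^2+\binom{k-1}{2}rt$ over the linear-determinantal dimension $2d^2+1$ (equivalently, the deficit of the codimension below $\binom{d-1}{3}$) is strictly positive unless $k=3,\ r=0$, and rule out that case since it forces $t=d/3>\lfloor(d-1)/3\rfloor$. The only cosmetic difference is that you make the binomial identity $\binom{d+3}{3}-\binom{d-1}{3}=2d^2+2$ explicit, whereas the paper routes the comparison through Corollary~\ref{cor::linearEntries}.
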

    \begin{proof}
        According to Corollary \ref{cor::minMax} and Conjecture \ref{conj::minMaxConj} one has
        \begin{align*}
            dim\ det(a,b)&\geq\binom{k-1}{3}t^2+\binom{k-1}{2}rt+2d^2+1>2d^2+1
        \end{align*}
        if either $k\geq4$, or $k\geq3$ and $r\neq0$. The condition $t\leq\frac{d-1}{3}$ ensures that $k\geq3$, so the only case in which the conclusion could fail is $k=3, r=0$. However, in this case
        $t=\frac{d}{3}>\left\lfloor\frac{d-1}{3}\right\rfloor$, which has been ruled out.
    \end{proof}
    \medskip\noindent

\medskip
\section{Quartic surfaces}
\noindent
We now turn our attention to the family of determinantal surfaces of degree 4. This family consists of 5 irreducible divisors, which we will denote by  $\mathcal{F}_1,\ldots,\mathcal{F}_5\subset|\OO_{\PP^3}(4)|$ depending on the options that an admissible pair $(a,b)$ of degree 4 has. In this section we compute the degree of each of these divisors.

\medskip\noindent
The (non-redundant) options for an admissible pair $(a,b)$ of degree 4 are listed in Table \ref{tab::quarticDivisors}. In this table, we denote by $X_i\in\mathcal{F}_i$, with $1\leq i\leq 5$, a general element and by $C_i\subset X_i$ a curve obtained by removing a column (see Prop. \ref{prop::keyObservation}). This table also lists the degree $d_i$ and the genus $g_i$ of the curve $C_i$. We keep the convention of normalizing an admissible pair $(a,b)$ of degree 4, so that $a_1=5$.

\medskip\noindent 
Combining Theorem \ref{thm::mainTheorem} and Proposition \ref{prop1} we get the following.
\begin{cor}[of Theorem \ref{thm::mainTheorem}]
    The families $\mathcal{F}_i$ are irreducible divisors inside $|\OO_{\PP^3}(4)|$ for $1\leq i\leq5$.
\end{cor}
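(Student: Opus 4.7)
The plan has two ingredients, both already in place. By Proposition \ref{prop1}, each family $det(a,b)$ is irreducible, so the irreducibility of every $\mathcal{F}_i$ is immediate. It then remains to verify that each $\mathcal{F}_i$ has codimension exactly $1$ in $|\OO_{\PP^3}(4)|$. Since $\dim |\OO_{\PP^3}(4)| = \binom{7}{3} - 1 = 34$, this reduces to checking $\dim \mathcal{F}_i = 33$ for each of the five admissible pairs of degree $4$ enumerated in Table \ref{tab::quarticDivisors}.

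The verification is a direct application of Theorem \ref{thm::mainTheorem}. With $d = 4$, the term $(d-4)d_C$ vanishes and $\binom{d-1}{3} = 1$, so the dimension formula simplifies to
$$\dim det(a,b) = 1 + \hom(B,A) - \hom(A,A) - \hom(B,B) - g_C.$$
For each of the five pairs, I would compute the three Hom-dimensions using $\hom(\OO_{\PP^3}(-\alpha), \OO_{\PP^3}(-\beta)) = \binom{\alpha-\beta+3}{3}$ when $\alpha \geq \beta$ (and $0$ otherwise), remembering to include the extra summand $\OO_{\PP^3}(-4)$ in $A$ coming from $a_0 = d = 4$. The genus $g_C$ of the associated ACM curve is then read off from formula (\ref{eq::degreeAndGenus}). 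Substituting and simplifying must yield $33$ in every case.

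As sanity checks for three of the five pairs: the line case ($t=2$, difference matrix $\binom{1\ 3}{1\ 3}$) yields codimension $d-3 = 1$ by Green-Voisin; the conic case ($t=2$, corresponding to $a=(0,1),\ b=(2,3)$ in Remark \ref{rmk::lineConic}) yields codimension $2d-7 = 1$ by Voisin; and the linear determinantal case ($t = 4$) yields dimension $2d^2 + 1 = 33$ by Corollary \ref{cor::linearEntries}. The remaining two pairs are handled uniformly by the same substitution.

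The only obstacle is arithmetic bookkeeping: being vigilant about the normalization $a_1 > d$ required by Theorem \ref{thm::mainTheorem} (one must shift each pair from the table before applying the formula) and correctly accounting for the rank of $A$. No further geometric input is needed beyond what is already established, since irreducibility, dimension, and generic smoothness all follow from the cited results.
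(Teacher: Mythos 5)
Your proposal is correct and follows exactly the route the paper takes: irreducibility comes from Proposition \ref{prop1}, and the codimension-one statement is a case-by-case evaluation of the dimension formula of Theorem \ref{thm::mainTheorem} for the five admissible pairs of degree $4$ (with the correct simplification $\binom{3}{3}=1$ and $(d-4)d_C=0$, and with the normalization $a_0=4<a_1$ accounted for). The paper itself records no more detail than "combine Theorem \ref{thm::mainTheorem} and Proposition \ref{prop1}", so your plan, together with the sanity checks against the line, conic, and linear-determinantal cases, is a faithful and adequately verified version of the intended argument.
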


\medskip\noindent
The following lemma shows that each $\mathcal{F}_i$ coincides with a known family of quartic surfaces.
 \begin{lem}\label{lem::simplerCurves}
    The component $\mathcal{F}_i$ coincides with the family of quartic surfaces containing the following curves
    \begin{enumerate}
        \item[$\mathcal{F}_1:$] a degree 6 and genus 3 curve given by the resolution
        $$0\to\OO_{\PP^3}(-4)^3\to\OO_{\PP^3}(-3)^4.$$
        \item[$\mathcal{F}_2:$] a twisted cubic.
        \item[$\mathcal{F}_3:$] the complete intersection of two quadric surfaces.
        \item[$\mathcal{F}_4:$] a line.
        \item[$\mathcal{F}_5:$] a conic.
    \end{enumerate}
 \end{lem}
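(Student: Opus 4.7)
The plan is to show, for each $i\in\{1,\ldots,5\}$, that $\mathcal{F}_i$ coincides with the family $\mathcal{G}_i$ of quartic surfaces containing a curve of type $C_i'$ in two steps: (a) produce a curve $C_i'$ inside a general $X_i\in\mathcal{F}_i$, which gives $\mathcal{F}_i\subseteq\mathcal{G}_i$; and (b) show that $\mathcal{G}_i$ is an irreducible divisor in $|\OO_{\PP^3}(4)|$. Combined with the preceding Corollary, which asserts that $\mathcal{F}_i$ itself is an irreducible divisor, these force $\mathcal{F}_i=\mathcal{G}_i$.

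For (a), the normalization $a_1=5$ pins down the shape of the defining matrix $S$ in each case, and $C_i'$ is cut out by maximal minors of a distinguished submatrix of $S$:
\begin{itemize}
\item $\mathcal{F}_1$: $S$ is $4\times 4$ with linear entries. Deleting any column gives a $4\times 3$ matrix whose four $3\times 3$ minors are cubic generators of $\mathcal{I}_{C_1'}$ via the Eagon--Northcott resolution. Laplace expansion of $\det S$ along the deleted column shows $F\in\mathcal{I}_{C_1'}$.
\item $\mathcal{F}_2$: $S$ is $3\times 3$ with two columns of linear forms and one column of quadrics. The $2\times 2$ minors of the linear columns are three quadrics generating $\mathcal{I}_{C_2'}$ (the twisted cubic), and Laplace expansion along the quadratic column places $F$ in $\mathcal{I}_{C_2'}$.
\item $\mathcal{F}_3$: $S$ is $2\times 2$ with quadratic entries; then $\{m_{11}=m_{21}=0\}$ is the $(2,2)$-complete intersection $C_3'$, and $F=m_{11}m_{22}-m_{12}m_{21}$ vanishes on it.
\item $\mathcal{F}_4$: $S$ is $2\times 2$ with linear first column and cubic second column; $\{m_{11}=m_{21}=0\}$ is the line $C_4'$.
\item $\mathcal{F}_5$: $S$ is $2\times 2$ with $m_{21}$ linear and $m_{11}$ quadratic; $\{m_{11}=m_{21}=0\}$ is the intersection of a plane and a quadric, namely the conic $C_5'$.
\end{itemize}
The invariants $(d_{C_i'}, g_{C_i'})$ in each case are recovered by substituting the resolution data into (\ref{eq::degreeAndGenus}).

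For (b), write $\mathcal{G}_i$ as the image, under the second projection, of the incidence variety $\{(C,X)\in\mathcal{H}_i\times|\OO_{\PP^3}(4)|:C\subset X\}$, where $\mathcal{H}_i$ denotes the Hilbert scheme of curves with the minimal free resolution of $C_i'$. By Proposition \ref{prop::Migliore} the scheme $\mathcal{H}_i$ is irreducible, and the fibers of the first projection are projective spaces $\PP(H^0(\mathcal{I}_{C_i'}(4)))$ of constant dimension; hence $\mathcal{G}_i$ is irreducible. The dimension formula
$$\dim\mathcal{G}_i=\dim\mathcal{H}_i+h^0(\mathcal{I}_{C_i'}(4))-1-\dim|\OO_X(C_i')|$$
is evaluated by twisting the resolution of $\mathcal{I}_{C_i'}$ by $\OO(4)$ to compute $h^0(\mathcal{I}_{C_i'}(4))$, and by applying Riemann--Roch on the K3 surface $X$ (with $(C_i')^2=2g_{C_i'}-2$) to compute $\dim|\OO_X(C_i')|$. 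A direct check yields $\dim\mathcal{G}_i=33$ in all five cases, matching $\dim\mathcal{F}_i$.

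The main subtlety is case $\mathcal{F}_1$: exhibiting the degree-$6$ genus-$3$ sextic $C_1'$ inside a generic linear determinantal quartic requires recognizing the Eagon--Northcott resolution of the ideal of $3\times 3$ minors of a $4\times 3$ matrix of linear forms as the prescribed resolution of $\mathcal{I}_{C_1'}$. The remaining cases reduce to elementary inspection of a $2\times 2$ or $3\times 3$ matrix together with the Hilbert polynomial formulas (\ref{eq::degreeAndGenus}).
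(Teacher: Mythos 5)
Your proof is correct, and it shares the paper's logical skeleton --- both $\mathcal{F}_i$ and the family $\mathcal{G}_i$ of quartics containing a $C_i'$ are irreducible divisors in $|\OO_{\PP^3}(4)|$, so a single containment forces equality --- but the key step, exhibiting $C_i'$ on a general $X_i\in\mathcal{F}_i$, is carried out differently. The paper produces $C_i'$ lattice-theoretically, as the residual class $[C_i']=5H-[C_i]$ (resp.\ $[C_4]-4H$ for $i=4$) of the curve $C_i$ from Proposition \ref{prop::keyObservation} under liaison, and simply asserts that the families $\mathcal{G}_i$ are irreducible of codimension one. You instead build $C_i'$ directly from the determinantal representation: as the vanishing of a column for $t=2$, and as the maximal minors of a submatrix (via Hilbert--Burch/Eagon--Northcott) for $t=3,4$, with Laplace expansion along the deleted row or column showing $F\in\mathcal{I}_{C_i'}$; and you supply the irreducibility of $\mathcal{G}_i$ and the dimension count $\dim\mathcal{G}_i=33$ (which checks out in all five cases, since the generic fiber of the second projection has dimension $\dim|\OO_X(C_i')|=g_{C_i'}$ on the K3 surface $X$) that the paper leaves implicit. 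Your route is more self-contained --- it does not presuppose knowledge of $\mathrm{Pic}(X_i)$ or that the residual class under a $(4,5)$-linkage is actually represented by a curve of the stated type --- at the cost of a case-by-case computation; the paper's is shorter but leans on liaison and on standard facts about these classical families. One cosmetic point: for $\mathcal{F}_2$ the entry degrees $b_j-a_i$ depend on the row index, so $S$ has one row of quadrics and two rows of linear forms rather than columns; since $\det S=\det S^{T}$ this does not affect your argument.
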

 \begin{proof}
     Since all the families in question are irreducible of codimension 1 in $|\OO_{\PP^3}(4)|$, it is enough to note that the curve $C_i$ obtained by removing a column is, in each case, the listed curve.
 \end{proof}

\begin{table}[t]
\begin{center}
    \begin{tabular}{|c|c|c|c|c|c|}
        \hline $\mathcal{F}_i$ & $a$ & $b$ & $d_i$ & $g_i$ & $\Delta_i$\\
        \hline \hline
        $\mathcal{F}_1$ & (5,5,5,5) & (6,6,6,6) & 6 & 3 & 20\\
        $\mathcal{F}_2$ & (5, 5, 5) & (6, 6, 7) & 3 & 0 & 17\\
        $\mathcal{F}_3$ & (5,5) & (7,7) & 4 & 1 & 16\\
        $\mathcal{F}_4$ & (5,5) & (6,8) & 1 & 0 & 9\\
        $\mathcal{F}_5$ & (5,6) & (7,8) & 2 & 0 & 12\\
        \hline
    \end{tabular} \centering \caption{The divisor $\mathcal{F}_i=det(a,b)\subset|\OO_{\PP^3}(4)|$ parametrizes quartic surfaces containing a curve of degree $d_i$ and genus $g_i$, and whose Picard lattice has discriminant $\Delta_i.$}\label{tab::quarticDivisors}
\end{center}
\end{table}
 
 \medskip\noindent 
It follows from the previous Lemma and Corollary \ref{cor::detInNL} that the intersection matrix of $Pic(X_i)$, with respect to the hyperplane section class $H$ and the class $[C_i]$, has the form
\begin{align*}
    A_i=\left(
    \begin{array}{cc}
        4 & d_i\\
        d_i & 2g_i-2
    \end{array}
    \right).
\end{align*}
Another important invariant which is listed in Table \ref{tab::quarticDivisors} is the \textit{discriminant} of $Pic(X)$, which we denote by $\Delta_i:=-det(A_i)$.

\medskip\noindent We are interested in computing the degree of each $\mathcal{F}_i$ inside $|\OO_{\PP^3}(4)|\cong\PP^{34}$. In order to do this, let us recall briefly the Noether-Lefschetz numbers for $K3$ surfaces. See \cite{MP} for an exposition on the topic.
\subsection{Noether-Lefschetz numbers} We consider the moduli space $\mathcal{M}_4$ of quasi-polarized K3 surfaces of degree 4. That is, $\mathcal{M}_4$ parametrizes K3 surfaces $X$, up to isomorphism, together with a divisor class $H$ of self-intersection $H^2=4$. The space $\mathcal{M}_4$ is a 19-dimensional variety and the general element $(X,H)\in\mathcal{M}_4$ has Picard group
$$Pic(X)\cong\mathbb{Z}\cdot H.$$
Noether-Lefschetz loci arise when one considers surfaces with Picard rank 2. Following \cite{MP}, we consider two types of Noether-Lefschetz loci. The first type corresponds to specifying a rank 2 lattice and asking that $Pic(X)$ is isomorphic to this lattice, with $H$ one of its generators. To specify such a lattice, we can assume that its intersection matrix has the form
\begin{align*}
    \mathbb{L}_{h,d}:=\left(
    \begin{array}{cc}
        4 & d\\
        d & 2h-2
    \end{array}
    \right).
\end{align*}
Up to isomorphism, the lattice is determined by two invariants. Namely, the \textit{discriminant} 
$$\Delta:=-det(\mathbb{L}_{h,d})=d^2-8h+8$$
and the \textit{coset} 
$$\delta\equiv d\mod 4.$$
One then may consider the integral divisor in $\mathcal{M}_4$ supported in the closure of all surfaces having Picard lattice of rank 2 with discriminant $\Delta$ and coset $\delta$
$$P_{\Delta,\delta}\subset\mathcal{M}_4.$$
Observe that, according to the Hodge index Theorem, $P_{\Delta,\delta}=0$ if $\Delta\leq0$. Also, $P_{\Delta,\delta}=0$ unless $\Delta-\delta^2$ is a multiple of 8 due to the fact that
    $$\Delta=\Delta(h,d):=d^2-8h+8\equiv \delta^2\mod 8.$$
    In this case, one can find a lattice $\mathbb{L}_{h,d}$ with invariants $(h,d)$ by choosing $d=\delta$ and $h=\frac{\Delta-\delta^2}{8}-1.$

\medskip\noindent
The second type of Noether-Lefschetz divisors is defined as
\begin{equation}\label{eq::noetherLefschetzDivisors}
    D_{h,d}:=\sum_{\Delta,\delta}\mu(h,d|\Delta,\delta)\cdot P_{\Delta,\delta}
\end{equation}
where $\mu(h,d|\Delta,\delta)$ is the number of classes $D$ in the lattice with invariants $\Delta,\delta$ such that $D^2=2h-2$ and $H\cdot D = d$. This number is always 0, 1 or 2. If $\mu(h,d|\Delta,\delta)\neq0$ then the classes $H, D$ do not necessarily generate the whole lattice with invariants $\Delta,\delta$; they may generate a sublattice isomorphic to $\mathbb{L}_{h,d}$. In this case, the corresponding discriminant $\Delta(h,d)$ is divisible by $\Delta$ and the quotient $\Delta(h,d)/\Delta$ must be a perfect square. If $\Delta=\Delta(h,d)$, then $D$ and $H$ do generate the whole lattice, hence $d\equiv\delta\mod4$.

\medskip\noindent Now consider a general pencil $\pi_0:\PP^1\to|\OO_{\PP^3}(4)|\cong\PP^{34}$. There is a dominant rational map $|\OO_{\PP^3}(4)|\dashrightarrow\mathcal{M}_4$ sending a quartic surface $X=\{F=0\}\subset\PP^3$ to its isomorphism class $X$, together with the hyperplane class $H\subset X$. The composition of this map and $\pi_0$ will be denoted by $\pi$.
\begin{definition}
    The \textit{Noether-Lefschetz number} $NL_{h,d}$ is the degree of $\pi^*(D_{h,d})$.
\end{definition}
\begin{prop}\cite[Corollary 2]{MP}\label{prop::modularForm}
    There exists an effectively computable modular form $\Theta-\Psi$ such that, if $\Delta(h,d)>0$, then $NL_{h,d}$ is equal to the coefficient of $q^{\Delta(h,d)/8}$ in $\Theta-\Psi$.
\end{prop}

\noindent We do not write down the exact definition of $\Theta-\Psi$ as we are only interested in the cases $\Delta\leq 20$. The following partial expression contains all the information we care about:
$$\Theta-\Psi=-1+320q^{9/8}+5016q^{3/2}+76950q^2+136512q^{17/8}+640224q^{5/2}+\ldots$$

\medskip\noindent
The reason we mentioned two descriptions of Noether-Lefschetz loci is that while the Noether-Lefschetz numbers $NL_{h,d}$ can be directly read off from the modular form $\Theta-\Psi$, we are actually interested in the analogous numbers defined by the classes $P_{\Delta,\delta}$; namely, $deg(\pi^*P_{\Delta,\delta})$. Therefore, we need to express $P_{\Delta,\delta}$ in terms of some $D_{h,d}$'s using (\ref{eq::noetherLefschetzDivisors}). This is what we do next.

\noindent
\subsection{Degrees of the divisors $\mathcal{F}_i$.} 
As a consequence of Lemma \ref{lem::simplerCurves} we have that 
$$deg(\mathcal{F}_i)=deg(\pi^*P_{\Delta_i,\delta_i}).$$
In order to use the modular form $\Theta-\Psi$ above in computing $deg(\mathcal{F}_i)$, we first need to write $P_{\Delta,\delta}$ in terms of divisors $D_{h,d}$. We will work out explicitly the case of $\mathcal{F}_3$ and list the results for the other cases. 

\medskip\noindent The first step is to consider the unique divisor $D_{h,d}$ for which a lattice $\mathbb{L}_{h,d}$ has invariants $\Delta_3=16$ and $\delta_3\equiv d_3\mod 4$. Since $d_3=16$, then $\delta\equiv 0 \mod 4$. This divisor is $D_{31,16}$ and is determined by the curve $C_3$. We now want to express $D_{31,16}$ in terms of various divisors $P_{\Delta,\delta}$. That is, we must find all the possible pairs $(\Delta,\delta)$ for which the lattice $\mathbb{L}_{31,16}$ has a rank 2 sub lattice with invariants $\Delta$ and $\delta$. In order to do this, we consider the discriminants $\Delta$ such that $\Delta_3/\Delta$ is a perfect square, and the possible cosets $\delta$ such that $\Delta\equiv\delta^2\mod 8$. We deduce that $D_{31,16}$ is a linear combination of $P_{16,0}, P_{4,2}, P_{1,1}$ and $P_{1,3}.$

\medskip\noindent The second step is to compute the multiplicities $\mu(31,16|16,0),\mu(31,16|4,2),\mu(31,16|1,1)$ and $\mu(31,16|1,3)$. We work out the first one. We need to consider a lattice $\mathbb{L}_{31,16}$, generated by classes $C,H$, and compute the number of classes $D=aH+bC$ such that
\begin{align*}
    60&=D^2=4a^2+32ab+60b^2\\
    16&=H\cdot D=4a+16b.
\end{align*}
That is, such that $D$ and $H$ span a sublattice of rank 2 with the given invariants. In this case, one solution is $D=C$. There is a second solution, namely $D=8H-C$. Therefore, $\mu(31,16|16,0)=2$. Similarly, one computes the other multiplicities:
\begin{align*}
    \mu(31,16|4,2)&=2\\
    \mu(31,16|1,1)&=2\\
    \mu(31,16|1,3)&=2
\end{align*}
which allows us to write the relation
$$D_{31,16}=2P_{16,0}+2P_{4,2}+2P_{1,1}+2P_{1,3}.$$
Since we are interested in $P_{16,0}$, we are left to repeat this process for the other divisors $P_{4,2}, P_{1,1}$ and $P_{1,3}$ occurring in the expression above. Doing this we obtain similar expressions
\begin{align*}
    D_{1,2}&=2P_{4,2}+2P_{1,1}+2P_{1,3}\\
    D_{1,1}&=P_{1,1}\\
    D_{2,3}&=P_{1,3}.
\end{align*}
Consequently, we get the identity $P_{16,0}=\frac{1}{2}\left(D_{31,16}-D_{1,2}\right)$. To finish up the computation of $deg(\mathcal{F}_3)$, we use Proposition \ref{prop::modularForm}:
\begin{align*}
    deg(\mathcal{F}_3)&=deg\ \pi^*\left(\frac{1}{2}(D_{31,16}-D_{1,2})\right)\\
    &=\frac{1}{2}\left(NL_{31,16}-NL_{1,2}\right)\\
    &=\frac{1}{2}(76950-0)\\
    &=38475.
\end{align*}

\medskip\noindent
Similar computations yield the following identities for the other cases $\mathcal{F}_i$, with $i=1,2,4,5$
   $$ P_{20,2}=\tfrac{1}{2}D_{23,14},\qquad P_{17,1}=D_{35,17},$$
  $$ P_{9,1}=D_{36,17}-D_{1,1}-D_{2,3},\qquad P_{12,2}=\tfrac{1}{2}D_{40,18}.$$

\medskip\noindent
These computations allow us to conclude the following. \setcounter{thmIntro}{1}
\begin{thmIntro}
    The family of determinantal quartic surfaces consists of 5 prime divisors $\mathcal{F}_1,\ldots,\mathcal{F}_5\subset|\OO_{\PP^3}(4)|$ depending on the choice of an admissible pair $(a,b)$ of degree 4. Each of these divisors has degree
    \begin{align*}
        deg(\mathcal{F}_1)&=320112,\\
        deg(\mathcal{F}_2)&=136512,\\
        deg(\mathcal{F}_3)&=38475,\\
        deg(\mathcal{F}_4)&=320,\\
        deg(\mathcal{F}_5)&=2508.
    \end{align*}
\end{thmIntro}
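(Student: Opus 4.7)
The plan is to reduce the computation of $\deg(\mathcal{F}_i)$ to a problem on the moduli space $\mathcal{M}_4$ of quasi-polarized K3 surfaces, and then extract the answer from the modular form $\Theta-\Psi$ of Proposition \ref{prop::modularForm}. First, by Proposition \ref{prop::picardRank2}, a general $X_i\in\mathcal{F}_i$ has Picard lattice isomorphic to $\mathbb{L}_{g_i,d_i}$ with invariants $(\Delta_i,\delta_i)$, where $\delta_i\equiv d_i\pmod 4$ and $\Delta_i$ is given in Table \ref{tab::quarticDivisors}. Therefore the image of $\mathcal{F}_i$ in $\mathcal{M}_4$ is generically contained in the locus parametrized by $P_{\Delta_i,\delta_i}$, and because both are irreducible divisors one obtains the identity $\deg(\mathcal{F}_i)=\deg\pi^\ast P_{\Delta_i,\delta_i}$, where $\pi$ is the composition of a general Lefschetz pencil in $|\mathcal{O}_{\PP^3}(4)|\cong\PP^{34}$ with the period map to $\mathcal{M}_4$.

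Next I would express each class $P_{\Delta_i,\delta_i}$ as a $\mathbb{Q}$-linear combination of Noether-Lefschetz divisors $D_{h,d}$. By the definition in (\ref{eq::noetherLefschetzDivisors}), the natural candidate divisor $D_{g_i,d_i}$, which is the class forced on $\mathcal{M}_4$ by the existence of the curve $C_i\subset X_i$, decomposes as
\begin{equation*}
D_{g_i,d_i}=\sum_{\Delta,\delta}\mu(g_i,d_i\,|\,\Delta,\delta)\,P_{\Delta,\delta},
\end{equation*}
where the sum runs over pairs $(\Delta,\delta)$ such that $\Delta_i/\Delta$ is a perfect square and $\Delta\equiv\delta^2\pmod 8$. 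For each such pair I would enumerate integer solutions $(a,b)$ of the system $D^2=2g_i-2$, $H\cdot D=d_i$ with $D=aH+bC_i$ inside a lattice of the specified discriminant and coset, thereby computing the multiplicities $\mu(g_i,d_i\,|\,\Delta,\delta)$. After triangular inversion over the finite set of relevant $(\Delta,\delta)$'s, one obtains an explicit formula expressing $P_{\Delta_i,\delta_i}$ as a rational combination of divisors $D_{h,d}$; the worked example of $P_{16,0}=\frac12(D_{31,16}-D_{1,2})$ illustrates how the procedure terminates when the auxiliary discriminants are small enough that $P_{\Delta,\delta}=0$ or coincide with a single $D_{h,d}$.

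With these relations in hand, the degree computation reduces to a linear combination of Noether-Lefschetz numbers $NL_{h,d}=\deg\pi^\ast D_{h,d}$, which by Proposition \ref{prop::modularForm} are read off as the coefficients of $q^{\Delta(h,d)/8}$ in the series
\begin{equation*}
\Theta-\Psi=-1+320\,q^{9/8}+5016\,q^{3/2}+76950\,q^{2}+136512\,q^{17/8}+640224\,q^{5/2}+\cdots.
\end{equation*}
For instance, reading off $NL_{23,14}$ (with $\Delta=20$), $NL_{35,17}$ (with $\Delta=17$), $NL_{36,17}$ (with $\Delta=9$) and $NL_{40,18}$ (with $\Delta=12$) from the appropriate coefficients, and combining with the already-derived inversion relations $P_{20,2}=\tfrac12 D_{23,14}$, $P_{17,1}=D_{35,17}$, $P_{9,1}=D_{36,17}-D_{1,1}-D_{2,3}$, $P_{12,2}=\tfrac12 D_{40,18}$, yields the five stated degrees after noting that the lower-discriminant divisors $D_{1,1}$, $D_{2,3}$ and $D_{1,2}$ vanish because the corresponding $\Delta$'s do not admit any $P_{\Delta,\delta}$ supported on honest rank $2$ loci.

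The main obstacle I expect is bookkeeping: one must correctly enumerate the pairs $(\Delta,\delta)$ contributing to each $D_{g_i,d_i}$, verify that the discarded terms $D_{h,d}$ with $\Delta(h,d)\le 0$ really vanish (so the Hodge index obstruction in Proposition \ref{prop::modularForm} applies), and check that the rank $2$ sublattice generated by $H$ and $C_i$ is primitive inside the Picard lattice in the cases where $\mu=1$, as opposed to the $\mu=2$ cases where a second solution $D'=aH-C_i$ contributes. These checks are case-by-case verifications on small integer lattices, but getting them consistent for all five cases simultaneously (so that the inversion over the Noether-Lefschetz divisors is actually triangular and invertible) is where errors are most likely to creep in.
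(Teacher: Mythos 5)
Your proposal is correct and follows essentially the same route as the paper's proof: reduce $\deg(\mathcal{F}_i)$ to $\deg\pi^{*}P_{\Delta_i,\delta_i}$ via Proposition \ref{prop::picardRank2}, decompose the divisors $D_{g_i,d_i}$ into the $P_{\Delta,\delta}$ by counting lattice classes, invert the resulting triangular system, and read the Noether--Lefschetz numbers off the coefficients of $\Theta-\Psi$ via Proposition \ref{prop::modularForm}. The one imprecision is your stated reason for discarding $D_{1,1}$, $D_{2,3}$, $D_{1,2}$: their discriminants ($1$, $1$, $4$) are positive, so the Hodge-index/perfect-square obstruction does not apply; they drop out simply because the corresponding coefficients of $q^{1/8}$ and $q^{1/2}$ in $\Theta-\Psi$ vanish (equivalently, no smooth quartic realizes those lattices), which is exactly how the paper disposes of $NL_{1,2}$.
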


\medskip
\begin{rmk}
It was pointed out to us by Bernd Sturmfels that the degree $\mathcal{F}_3=38475$ can be thought of (and computed) as the degree of the component of forms that are sums of four squares of quadrics inside $\Sigma_{4,4}$, the cone of quartics which are sums of squares \cite[Theorem 1]{Bernd}.
\end{rmk}

\bigskip
\begin{bibdiv}
\begin{biblist}
  
\bib{Beau}{article}{
  title={Determinantal hypersurfaces},
  author={Beauville, A.},
  journal={Michigan Mathematical Journal},
  volume={48},
  number={1},
  pages={39--64},
  year={2000},
  publisher={University of Michigan, Department of Mathematics}}

\bib{Bernd}{article}{
 title={Algebraic boundaries of Hilbert's SOS cones},
  author={Blekherman, G.}
  author={Hauenstein, J.}
  author={Ottem, J. C.}
  author={Ranestad, K.} 
  author={Sturmfels, B.},
  journal={Compositio Mathematica},
  volume={148},
  number={6},
  pages={1717--1735},
  year={2012},
  publisher={London Mathematical Society}
} 
  
\bib{BS}{article}{
  title={The stability of certain vector bundles on $\PP^{n}$},
  author={Bohnhorst, G.},
  author={Spindler, H.},
  conference={ 
  title={Complex Algebraic Varieties}, 
  address={Bayreuth, Germany},
  date={April 2--6},
  year={1990}},
 pages={39--50},
  year={1992},
  publisher={Springer}}

\bib{GH2}{article}{
  title={Infinitesimal variations of Hodge structure (I)},
  author={Carlson, J.}
  author={Green, M.L.}
  author={Griffiths, P.}
  author={Harris, J.},
  journal={Compositio Mathematica},
  volume={50},
  number={2-3},
  pages={109--205},
  year={1983}
}

   \bib{CiroJoe}{article}{
 title={General components of the Noether-Lefschetz locus and their density in the space of all surfaces},
author={Ciliberto, C.}
author={Harris, J.}
author={Miranda, R.},
  journal={Mathematische Annalen},
  volume={282},
  number={4},
  pages={667--680},
  year={1988}
}

   \bib{CiroAngelo}{article}{
 title={On the existence of components of the Noether-Lefschetz locus with given codimension},
author={Ciliberto, C.}
author={Lopez, A. F.}
  journal={Manuscripta Mathematica},
  volume={73},
  number={},
  pages={341-357},
  year={1991}
}

 \bib{ellingsrud}{article}
 {
    title={Sur le schéma de Hilbert des variétés de codimension $2 $ dans $\mathbf {P}^ e $ à cône de Cohen-Macaulay},
    author={Ellingsrud, G.},
    journal={Annales scientifiques de l'École Normale Supérieure},
    volume={8},
    number={4},
    pages={423--431},
    year={1975}
 }

\bib{Kodaira}{article}{ 
  title={On deformations of complex analytic structures, I, II},
  author={Kodaira, K.},
  author={Spencer, D.},
  journal={Ann. of Mat.},
  volume={67},
  pages={},
  year={1958}}
  
\bib{Angelo}{book}{
  title={Noether-Lefschetz theory and the Picard group of projective surfaces},
  author={Lopez, A. F.},
  volume={89},
  number={438},
  year={1991},
  publisher={Memoirs of the AMS}
  }
 
\bib{Gaeta}{article}{
  title={Nuove ricerche sulle curve sghembe algebriche di residuale finito e sui gruppi di punti del piano},
  author={Gaeta, F.},
  journal={Annali di Matematica Pura ed Applicata},
  volume={31},
  pages={1--64},
  year={1950},
  publisher={Springer}}

\bib{Grassman}{article}{
  title={Die stereometrischen Gleichungen dritten Grades, und die dadurch erzeugten Oberfl{\"a}chen.},
  author={Grassmann, H.},
  journal={Journal f{\"u}r die reine und angewandte Mathematik (Crelles Journal)},
  volume={1855},
  number={49},
  pages={47--65},
  year={1855},
  publisher={De Gruyter}}

  \bib{Green}{article}{
title={A new proof of the explicit Noether-Lefschetz theorem},
  author={Green, M. L.},
  journal={Journal of Differential Geometry},
  volume={27},
  number={1},
  pages={155--159},
  year={1988},
  publisher={Lehigh University}
}
\bib{Green2}{article}{
title={Components of maximal dimension in the Noether-Lefschetz locus},
  author={Green, M. L.},
  journal={Journal of Differential Geometry},
  volume={29},
  number={2},
  pages={295--302},
  year={1989},
  publisher={Lehigh University}
}

\bib{GruPe}{article}{
 Author = {Gruson, L.} 
 Author={Peskine, C.},
 Title = {Genus of curves of projective space},
 Year = {1978},
 Language = {French},
 HowPublished = {Algebr. {Geom}., {Proc}., {Troms{\'o}} {Symp}. 1977, {Lect}. {Notes} {Math}. 687, 31-59 (1978).},
 DOI = {10.1007/BFb0062927},
 Keywords = {14H45,14M07},
 zbMATH = {3639744},
 Zbl = {0412.14011}
}

\bib{GM}{article}{
  title={Hyperplane sections of a smooth curve in  $\PP^3$},
  author={Geramita, A. V.},
  author={Migliore, J. C.},
  journal={Communications in Algebra},
  volume={17},
  number={12},
  pages={3129--3164},
  year={1989},
  publisher={Taylor \& Francis}}

\bib{MP}{article}{
  title={Gromov-Witten theory and Noether-Lefschetz theory},
  author={Maulik, D.},
  author={Pandharipande, R.},
  conference={ 
  title={A Celebration of Algebraic Geometry: A Conference in Honor of Joe Harris' 60th Birthday}, 
  address={Harvard University}, 
  date={August 25--28 },
  year={2011}},
  pages={469--507},
  year={2013},
  publisher={Clay Mathematics Proceedings},
  volume={18}}

\bib{mig2}{book}{ 
  title={Introduction to liaison theory and deficiency modules},
  author={Migliore, J.C.},
  volume={165},
  year={1998},
  publisher={Springer 
  }}

\bib{Noether}{article}{
  title={Zur Grundlegung der Theorie der algebraischen Raumcurven},
  author={Noether, M.},
  journal={Journal f\"{u}r die reine und angewandte Mathematik}, 
  volume={93},
  pages={271-318},
  year={1882}}

\bib{Peskine}{article}{ 
  title={Liaison des vari{\'e}t{\'e}s alg{\'e}briques. I.},
  author={Peskine, C.},
  author={Szpiro, L.},
  journal={Inventiones mathematicae},
  volume={26},
  pages={271--302},
  year={1974}}

\bib{reichstein2017dimension}{article}{ 
  title={On the dimension of the locus of determinantal hypersurfaces},
  author={Reichstein, Z.},
  author={Vistoli, A.},
  journal={Canadian Mathematical Bulletin},
  volume={60},
  number={3},
  pages={613--630},
  year={2017},
  publisher={Cambridge University Press}}


\bib{sernesi}{book}{ 
  title={Deformations of algebraic schemes},
  author={Sernesi, E.},
  journal={Springer},
  volume={334 
  },
  number={},
  pages={},
  year={2006},
  publisher={Springer-Verlag}}

\bib{MV}{article}{ 
  author={Vite, M.},
  date={},
 eprint={https://sites.google.com/view/montserratvite-math/determinantales}}

\bib{Voisin}{article}{
 title={Une pr{\'e}cision concernant le th{\'e}oreme de Noether},
  author={Voisin, C.},
  journal={Math. Ann.},
  volume={280},
  pages={605--611},
  year={1988}
}

\bib{Voisin2}{article}{
 title={Composantes de petite codimension du lieu de Noether-Lefschetz},
  author={Voisin, C.},
  journal={Comment. Math. Helv.},
  volume={64},
  number={4},
  year={1989}
}

\end{biblist} 
\end{bibdiv}

\end{document}